\newtheorem{thm}{Theorem}[section]
\newtheorem{cor}[thm]{Corollary}
\newtheorem{lem}[thm]{Lemma}
\newtheorem{prop}[thm]{Proposition}
\newtheorem{prob}[thm]{Problem}
\theoremstyle{definition}
\newtheorem{defin}[thm]{Definition}
\newtheorem{rem}[thm]{Remark}
\numberwithin{equation}{section}
\def \N {\mathbb N}
\def \Z {\mathbb Z}
\def \R {\mathbb R}
\def \X {\mathcal{X}}
\def \a {\alpha }
\def \b {\beta}
\def \w {\omega}
\begin{document}

%%%%% To ease editing, for IMPAN journals add:

\baselineskip=17pt

%%%%%%%%%%%

%% In the running head, replace first names by initials 
%% and give an abbreviation of the title.

\title[Uniformly rigid models for rigid actions]{Uniformly rigid models for rigid actions}

\author[S. Donoso]{Sebasti\'an Donoso}
\address{ Departamento de Ingenier\'{\i}a
	Matem\'atica \\  Universidad de Chile\\ Beauchef 851, Santiago, Chile.}
\email{sdonosof@gmail.com}

\author[S. Shao]{Song Shao}
\address{Wu Wen-Tsun Key Laboratory of Mathematics USTC, Chinese Academy of Sciences and
	Department of Mathematics\\ University of Science and Technology of China \\
	Hefei, Anhui, 230026, P.R. China.}
\email{songshao@ustc.edu.cn}

\date{}

\begin{abstract} In this article we show that any ergodic non-periodic rigid system can be topologically realized by a uniformly rigid and (topologically) weak mixing topological dynamical system.
\end{abstract}

\subjclass[2010]{Primary: 54H20; Secondary: 37B10}

\keywords{Rigidity, topological models}

\maketitle

\section{Introduction}

A fundamental problem in ergodic theory and topological dynamics is the one of recurrence. In this paper we are interested in the relation in the measurable and topological context of a special strong form of recurrence, the so called rigidity property. The main result states that any ergodic rigid system can be topologically realized in a uniformly rigid and topologically weakly mixing system.

A measure preserving system $(X,\mathcal{X},\mu,T)$ is rigid if there exists an increasing sequence $(n_i)_{i\in \N}$ in $\N$ such that $T^{n_i}$ converges to the identity in the strong operator topology. This means that for any $f\in L^{2}(\mu)$ one has that $\|f-f\circ T^{n_i}\|_{2}$ goes to 0 as $i$ goes to infinity. This is also equivalent to say that $\mu(A\cap T^{n_i}A)$ converges to $\mu(A)$ for any measurable set $A$. Usually one refers to the sequence $(n_i)_{i\in \N}$ as a rigidity sequence of $(X,\mathcal{X},\mu,T)$. Very recently, nice results about what kind of sequences can be rigidity sequences for weakly mixing systems have been given \cite{BJLR,FK,FT}.

Topological analogues of rigidity were introduced by Glasner and Maon \cite{GM}. A topological dynamical system $(X,T)$ is (topologically) rigid if there exists an increasing sequence $(n_i)_{i\in \N}$ in $\N$ such that $T^{n_i}x$ converges to $x$ as $i$ goes to infinity for every $x\in X$ ({\em i.e.} $T^{n_i}$ converges pointwisely to the identity). A topological dynamical system is uniformly rigid if  $\sup\limits_{x\in X} d(x,T^{n_i}x) \to 0$ as $i$ goes to infinity, {\em i.e.} $T^{n_i}$ converges uniformly to the identity map. It is clear that uniform rigidity implies rigidity but the converse is not true even for minimal systems \cite{GM,K}.
By the Lebesgue Dominated Convergence Theorem, if $(X,T)$ is rigid (topologically) then $(X,\mathcal{B}(X),\mu,T)$ is rigid (in the measurable setting) for any invariant measure $\mu$, where $\mathcal{B}(X)$ is the Borel $\sigma$-algebra. So, as one could expect, the topological rigidity property is a much stronger notion than the measurable one. However, we show that there is no a real difference from the measurable point of view. Our main result states that any ergodic rigid system can be topologically realized in a uniformly rigid system.

\medskip

Let $(X,\X, \mu, T)$ be an ergodic dynamical system. We say that $(\hat{X}, \mathcal{B}(\hat{X}), \hat{\mu}, \hat{T})$ is a {\em topological model}
(or just a {\em model}) for $(X,\X, \mu, T)$ if $(\hat{X}, \hat{T})$ is a
topological system, $\hat{\mu}$ is an invariant Borel probability measure on $\hat{X}$ and the systems
$(X,\X, \mu, T)$ and $(\hat{X}, \mathcal{B}(\hat{X}), \hat{\mu}, \hat{T})$ are measure
theoretically isomorphic. In this case, one also says that $(X,\X,\mu,T)$ can be (topologically) realized by $(\hat{X},\hat{T})$.

\begin{thm} \label{THM1}
	Let $(X,\X,\mu,T)$ be a non-periodic ergodic  invertible measure preserving system, rigid for the sequence $(n_i)_{i\in \N}$. Then, there exists a topological model $(\widehat{X},\widehat{T})$ for $(X,\X,\mu,T)$ which is uniformly rigid for a subsequence of $(n_i)_{i\in \N}$. Moreover, $(\widehat{X},\widehat{T})$ can be taken topologically weak mixing.
\end{thm}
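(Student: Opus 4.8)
The plan is to construct the model through the Gelfand representation of a carefully chosen $T$-invariant subalgebra of $L^{\infty}(\mu)$, so that its topological dynamics can be read off from the algebraic properties of the generators. Recall that if $\mathcal{A}\subseteq L^{\infty}(\mu)$ is a separable, norm-closed, conjugation-closed subalgebra containing the constants, invariant under $f\mapsto f\circ T^{\pm 1}$, and generating $\X$ modulo $\mu$-null sets, then its Gelfand spectrum $\widehat{X}$ is a compact metric space on which the dual of $T$ induces a homeomorphism $\widehat{T}$, the measure $\mu$ pushes to an invariant Borel measure $\widehat{\mu}$, and $(\widehat{X},\widehat{\mu},\widehat{T})$ is a topological model of $(X,\mu,T)$; moreover the Gelfand transform is isometric, so $\sup_{\widehat{x}}|\widehat{f}(\widehat{x})|=\|f\|_{L^{\infty}(\mu)}$ for every $f\in\mathcal{A}$. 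Fixing a sequence $(f_k)$ of generators and the metric $d(\widehat{x},\widehat{y})=\sum_k 2^{-k}|\widehat{f}_k(\widehat{x})-\widehat{f}_k(\widehat{y})|$ on $\widehat{X}$, the model is uniformly rigid along a sequence $(m_j)$ as soon as each generator satisfies $\|f_k\circ T^{m_j}-f_k\|_{L^{\infty}}\to 0$, since then $\sup_{\widehat{x}}d(\widehat{T}^{m_j}\widehat{x},\widehat{x})\le\sum_k 2^{-k}\|f_k\circ T^{m_j}-f_k\|_{L^{\infty}}\to 0$ by dominated convergence.

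The first step is therefore to observe that, for a fixed $(m_j)$, the set
\[
\mathcal{UR}\big((m_j)\big)=\Big\{\,f\in L^{\infty}(\mu):\ \|f\circ T^{m_j}-f\|_{L^{\infty}}\to 0\,\Big\}
\]
is itself a norm-closed, conjugation-closed, constant-containing subalgebra of $L^{\infty}(\mu)$, invariant under $f\mapsto f\circ T^{\pm1}$ (the algebra property follows from $\|(fg)\circ T^{m_j}-fg\|_{L^{\infty}}\le \|f\|_{L^{\infty}}\|g\circ T^{m_j}-g\|_{L^{\infty}}+\|g\|_{L^{\infty}}\|f\circ T^{m_j}-f\|_{L^{\infty}}$, and the $T$-invariance from the fact that $T$ preserves $\|\cdot\|_{L^{\infty}}$). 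Consequently the whole uniform-rigidity statement reduces to the single assertion that there is a subsequence $(m_j)$ of $(n_i)$ for which $\mathcal{UR}((m_j))$ separates points of $(X,\X,\mu)$, i.e. generates $\X$ modulo null sets; one then lets $\mathcal{A}$ be the separable $C^{*}$-algebra generated by a countable generating family and applies the preceding paragraph. The measure isomorphism is automatic from the standard Gelfand-model fact, and replacing $\widehat{X}$ by the support of $\widehat{\mu}$ (a closed invariant set of full measure) we may assume $\widehat{\mu}$ has full support; together with ergodicity this already forces $(\widehat{X},\widehat{T})$ to be topologically transitive.

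The heart of the matter, and the step I expect to be the main obstacle, is exactly this reduction, because measurable rigidity only supplies $L^{2}$-convergence $\|g\circ T^{n_i}-g\|_2\to 0$, whereas membership in $\mathcal{UR}$ demands uniform (essential-sup) convergence. The gap is genuine: no nonconstant indicator lies in any $\mathcal{UR}((m_j))$, so the generators cannot be approximate-invariant sets but must be honestly slowly varying functions. To manufacture them I would synchronise Rokhlin towers with the rigidity times. Fixing a countable generating family of sets $(A_k)$ and a scale, I would use Rokhlin's lemma (available since an ergodic nonatomic $T$ is aperiodic) to build a tall tower filling most of $X$, and define a continuous-valued ramp that is constant on each level, increases by a tiny increment from level to level, and simultaneously codes $A_k$ up to small $L^{2}$-error. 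A rigidity time $m_j$ chosen large relative to the tower height but with $\mu(A\triangle T^{m_j}A)$ small displaces $\mu$-most points by only a controlled number of levels, so $T^{m_j}$ changes the ramp value by only a few increments; diagonalising over $k$ and over finer scales yields a single $(m_j)$ and a generating family $(f_k)\subseteq\mathcal{UR}((m_j))$. The delicate point is that near-invariance and ``displacement by few levels'' are a priori only $\mu$-almost-everywhere statements, whereas membership in $\mathcal{UR}$ is an everywhere (essential-sup) statement; making the construction produce functions that are honestly uniformly rigid --- by controlling the exceptional set and passing to the compact model where it is cleaned up --- is where the real work lies.

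Finally, for the topological weak mixing, recall this means transitivity of $(\widehat{X}\times\widehat{X},\widehat{T}\times\widehat{T})$, equivalently transitivity together with the absence of nonconstant continuous eigenfunctions. When $(X,\mu,T)$ is measurably weakly mixing this comes for free: with $\widehat{\mu}$ of full support there are no nonconstant $L^{2}(\widehat{\mu})$-eigenfunctions, a fortiori none continuous, and $\widehat{\mu}\times\widehat{\mu}$ is ergodic of full support, giving product transitivity. In the general case the measure cannot supply product transitivity, so I would build it into the coding, enlarging the generating family by functions that realise a topologically weakly mixing pattern which is $\mu$-invisible (carried by the extra orbit structure introduced in the compactification) while introducing no continuous eigenvalue. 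The point that makes this compatible with uniform rigidity is that topological weak mixing only forbids continuous eigenfunctions, which the measure isomorphism never forces to exist; nonetheless, arranging topological weak mixing without disturbing the uniform rigidity already secured is the second genuine difficulty of the argument.
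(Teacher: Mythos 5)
Your framework, despite the Gelfand-spectrum packaging, is essentially the paper's: the spectrum of the separable $T$-invariant algebra generated by the translates of your $f_k$ is nothing but the itinerary model $X_f=\mathrm{supp}\,(f^{\infty})_*\mu$ used there, and your reduction --- find a subsequence $(m_j)$ of $(n_i)$ and a generating family inside $\mathcal{UR}((m_j))$ --- is exactly the content of the Corollary attributed to Weiss. The genuine gap is that you flag but do not solve precisely the step that constitutes the proof. Your tower-plus-ramp sketch yields functions for which $\|f\circ T^{m_j}-f\|$ is small only off an exceptional set of small positive measure (points near the roof, and points whose tower level is not preserved by $T^{m_j}$): that is smallness in measure, not in essential sup. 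Your proposed repair, ``passing to the compact model where it is cleaned up,'' cannot work, for the very reason you record yourself: the Gelfand transform is isometric, so if $\|f\circ T^{m_j}-f\|_{L^{\infty}}$ stays large then $\sup_{\widehat{x}}|\widehat{f}(\widehat{T}^{m_j}\widehat{x})-\widehat{f}(\widehat{x})|$ stays large on any model. The exceptional set must be killed at the level of the function, by redefining $f$ there --- and this is the paper's actual construction: starting from a Lehrer model (minimal, topologically \emph{mixing}, with clopen generator), it modifies $f_i$ on a set of measure $<r_{i+1}$ by graded transitions in $2K_{i+1}$ steps between consecutive principal subcolumns of a two-height K--R tower (Lemma \ref{KR tower}; Lemma \ref{Lem:goodTransition} gives the $1/K_{i+1}$ bound \emph{everywhere}), zeroes the top and bottom to control the wrap-around, and, crucially, uses topological mixing of the ambient model to splice blocks (Step II together with Lemma \ref{Lem:Transition}) so that securing goodness at $n'_{i+1}$ does not spoil the strict inequalities already secured at $n'_1,\ldots,n'_i$. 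This inductive non-spoiling bookkeeping, preserved in the pointwise limit and then transferred to $X_f$, is absent from your outline and is the heart of the matter; without it your diagonalisation has nothing to diagonalise over.

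Your weak-mixing plan has a second, structural flaw beyond incompleteness. The theorem covers ergodic rigid systems that are measurably \emph{not} weakly mixing (irrational rotations, say), so the case you dispose of ``for free'' is not the relevant one; and in the general case your idea of adjoining ``a topologically weakly mixing pattern which is $\mu$-invisible'' is self-defeating within your own setup: after you replace $\widehat{X}$ by $\mathrm{supp}\,\widehat{\mu}$ (as you do to obtain transitivity), any $\mu$-null pattern disappears, while if you keep the larger spectrum, uniform rigidity --- a property of \emph{every} point --- must also hold on the added null structure, and you supply no mechanism for product transitivity there. The paper does the opposite: it makes the witnesses of weak mixing carry positive measure. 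At stage $n+1$ it copies into the bottom of a pure column a designed word $\w_n$ in which every pair of names of cells of $\bigvee_{j=0}^{n-1}T^{-j}\alpha_n$ occurs at a fixed separation $s_{n+1}$, so that $\mu\times\mu\bigl((T\times T)^{s_{n+1}}(E_1\times F_1)\cap(E_2\times F_2)\bigr)\geq e_{n+1}^2>0$ for all such cells (Lemma \ref{lem4.1}), and the quantitative constraint $r_{t+1}<e_t^2/4t$ guarantees these intersections survive the remaining perturbations, giving transitivity of the product directly on the support; placing the copied word in the bottom, with long enough transitions, keeps the rigidity estimates intact. Some concrete device of this kind is indispensable, and your proposal contains none.
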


Putting $\mathcal{A}$ to be the algebra of continuous functions on $\widehat{X}$ we deduce

\begin{cor}
	Let $(X,\X,\mu,T)$ be an ergodic measure preserving system, rigid for the sequence $(n_i)_{i\in \N}$. Then there exists a subsequence $(n_i')_{i\in \N}$ of $(n_i)_{i\in \N}$ and a separable subalgebra $\mathcal{A}\subset L^\infty(\mu)$ which is dense in $L^{2}(\mu)$ such that $\|f-f\circ T^{n_i'}\|_{\infty}\to 0$ for any $f\in \mathcal{A}$.
\end{cor}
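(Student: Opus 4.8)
The plan is to read the corollary straight off Theorem \ref{THM1}, transporting its \emph{topological} uniform-rigidity statement back to $L^\infty(\mu)$ through the measure isomorphism. First I would apply Theorem \ref{THM1} to obtain a topological model $(\wh{X},\wh{T})$ of $(X,\mu,T)$ that is uniformly rigid along a subsequence $(n_i')_{i\in\N}$ of $(n_i)_{i\in\N}$ (the topological weak mixing is not needed here), together with the accompanying measure isomorphism $\pi\colon(\wh{X},\wh{\X},\wh{\mu},\wh{T})\to(X,\X,\mu,T)$. Thus $\pi$ is defined $\wh{\mu}$-a.e., satisfies $\pi_*\wh{\mu}=\mu$, intertwines the dynamics, $\pi\circ\wh{T}=T\circ\pi$ a.e., and admits an a.e.-defined measurable inverse $\pi^{-1}$.

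Next I would record the elementary but crucial fact that every $g\in C(\wh{X})$ satisfies $\|g-g\circ\wh{T}^{\,n_i'}\|_\infty\to 0$. Since $\wh{X}$ is compact metric, $g$ is uniformly continuous, so for $\epsilon>0$ there is $\delta>0$ with $|g(x)-g(y)|<\epsilon$ whenever $d(x,y)<\delta$; by uniform rigidity there is $i_0$ with $\sup_x d(x,\wh{T}^{\,n_i'}x)<\delta$ for $i\ge i_0$, whence $\sup_x|g(x)-g(\wh{T}^{\,n_i'}x)|\le\epsilon$. This is exactly the place where uniform (rather than mere pointwise) rigidity is used.

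I would then set $\mathcal{A}=\{\,g\circ\pi^{-1}:g\in C(\wh{X})\,\}\subset L^\infty(\mu)$. The composition operator $g\mapsto g\circ\pi^{-1}$ is a norm-decreasing algebra homomorphism from $(C(\wh{X}),\|\cdot\|_\infty)$ into $L^\infty(\mu)$ and extends to an isometry $L^2(\wh{\mu})\to L^2(\mu)$. Consequently $\mathcal{A}$ is a subalgebra of $L^\infty(\mu)$, it is separable as the image of the separable space $C(\wh{X})$, and it is dense in $L^2(\mu)$ because $C(\wh{X})$ is dense in $L^2(\wh{\mu})$. Finally, using the a.e.\ equivariance $\pi^{-1}\circ T^{n_i'}=\wh{T}^{\,n_i'}\circ\pi^{-1}$, for $f=g\circ\pi^{-1}$ I would compute $\|f-f\circ T^{n_i'}\|_{L^\infty(\mu)}=\|g-g\circ\wh{T}^{\,n_i'}\|_{L^\infty(\wh{\mu})}\le\|g-g\circ\wh{T}^{\,n_i'}\|_\infty\to 0$, which gives the claim.

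The substantive content is entirely contained in Theorem \ref{THM1}, so the only point requiring care is the bookkeeping with the a.e.-defined isomorphism: I must check that composition with $\pi^{-1}$ respects products and intertwines the two maps up to $\mu$-null sets, so that the supremum-norm estimate valid \emph{everywhere} on $\wh{X}$ descends to an essential-supremum estimate on $\mathcal{A}\subset L^\infty(\mu)$. This is routine, and I expect no genuine obstacle beyond verifying these measure-theoretic identifications.
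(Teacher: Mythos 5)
Your proof is correct and follows exactly the paper's intended route: the paper derives this corollary in one line by ``putting $\mathcal{A}$ to be the algebra of continuous functions on $\widehat{X}$'' from Theorem \ref{THM1}, and your argument simply fills in the routine verifications (uniform continuity plus uniform rigidity gives the sup-norm convergence on $C(\widehat{X})$, and the measure isomorphism transports this to an essential-supremum statement for a separable, $L^2$-dense subalgebra of $L^\infty(\mu)$). No gaps.
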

This result is attributed to Weiss in \cite{GM} but the proof was not published.

\medskip
A sequence  $(n_i)_{i\in \N}$ is called {\it a rigidity sequence} if there exists a measure preserving system for which $(n_i)_{i\in \N}$ is a rigidity sequence.  Since in Theorem \ref{THM1} we get a subsequence of the original sequence, a natural question arise is the following:

\begin{prob}
	Give conditions for a sequence $(n_i)_{i\in \N}$ to be a uniform rigidity sequence for a non-periodic topologically weakly mixing dynamical system. Is there a sequence $(n_i)_{i\in \N}$ which is a rigidity sequence (in the measurable category) but it is not a uniform rigidity sequence (in the topological category)?
\end{prob}

\bigskip

\section{Preliminaries}

\subsection{Measurable and topological systems}
A {\em measure preserving system} is a 4-tuple $(X,\mathcal{X},\mu,T)$ where $(X,\mathcal{X},\mu)$ is a probability space and $T$ is a measurable measure-preserving transformation on $X$. { In this paper, we assume that $T$ is invertible and both $T$ and $T^{-1}$ are measure-preserving transformations.} It is {\em ergodic} if any invariant set has measure 0 or 1. For an ergodic system, either the space $X$ consists of a finite set of points on which $\mu$
is equidistributed, or the measure $\mu$ is atom-less. In the first case the system is called
{\em periodic}, and it is called {\em non-periodic} in the latter.

A {\em topological dynamical system} is a pair $(X,T)$ where $X$ is a compact metric space and $T\colon X\to X$ is a homeomorphism. It is said to be {\em transitive} when there is a point $x\in X$ whose orbit $\{T^n x: n\in \Z\}$ is dense in $X$. It is {\em minimal} if any point has a dense orbit. A topological dynamical system is {\em weakly mixing} if the Cartesian product system $(X\times X,T\times T)$ is transitive. This is equivalent to that for any  four non-empty open sets $A,B,C,D$, there exists $n\in \Z$ such that $A\cap T^{-n}B\neq \emptyset$ and $C\cap T^{-n}D\neq \emptyset$. A topological dynamical system is (strongly) mixing if for any two non-empty open sets $A,B$ there exists $M\in \N$ such that { for any $n\in \Z$ with $|n|\geq M$ one has that $A\cap T^{-n}B\neq \emptyset$.}

By the Krylov-Bogoliuvov Theorem, any topological dynamical system $(X,T)$ admits a non-empty convex set of invariant probability measures, which is denoted by $M(X,T)$. The extremal points of $M(X,T)$ are the ergodic measures.

A deep link between measure preserving systems and topological dynamical systems is the Jewett-Krieger Theorem \cite{J,Kr}, which asserts that any ergodic non-periodic measure preserving system is measurably isomorphic to a uniquely ergodic topological dynamical system $(X,T)$, meaning that $(X,T)$ possesses only one invariant measure (which is ergodic). Many generalization, in different contexts have been found for the Jewett-Krieger Theorem \cite{Le,W1} and very recently several applications have been given for the pointwise convergence of different ergodic averages \cite{DS,DS2,HSY2} and to build interesting examples in topological dynamics \cite{LSY}. All these recent results show that topological dynamical systems can help to understand purely ergodic problems.

%\subsection{Names, towers and symbolic representations }

%We introduce some general notions about towers which are used during the proof of our main results.

\subsection{Partitions}

Let $(X,\X,\mu, T)$ be a measure preserving system system. A {\em partition} $\a$ of $X$ is a family of disjoint measurable subsets of $X$ whose union is $X$. Let $\a$ and $\b$ be two partitions of $(X,\X,\mu,T)$. We say that $\a$ {\em refines} $\b$, denoted by $\a\succ \b$ or $\b\prec\a$, if each element of $\b$ is a union of elements of $\a$. $\a \succ \b$ is equivalent to $\sigma(\b)\subseteq \sigma(\a)$, where $\sigma(\mathcal{A})$ is the $\sigma$-algebra generated by the family $\mathcal{A}$.

\medskip

Let $\a$ and $\b$ be two partitions. Their {\em join} is the partition $\a\vee\b=\{A\cap B:A\in \a, B\in \b\}$ and one can extend this definition naturally to a finite number of partitions. For $m\le n$, define
$$\a_{m}^n=\bigvee_{i=m}^nT^{-i}\a=T^{-m}\a\vee T^{-(m+1)}\a \vee\ldots\vee T^{-n}\a ,$$
where $T^{-i}\a=\{T^{-i}A: A\in \a\}$.

\subsection{Rohklin towers}
Let $(X,\mathcal{X},\mu,T)$ be an ergodic measure preserving system and let $A$ be a measurable set. If $N\in \N$ and the sets $A,TA,\ldots , T^{N-1}A$ are pairwise disjoint, the array
\[ \mathfrak{c}= \{A,TA,\ldots ,T^{N-1}A \} \]
is called a {\em column } or {\em Rohklin tower} with base $A$ and height $N$. We usually refer to the sets $T^{i}A$, $i=0,\ldots,N-1$ as the levels of the column. The levels $A$ and $T^{N-1}A$ are called {\em base} and {\em roof} respectively.

$$
\xymatrix @W=3pc @H=1pc @R=0pc @*[F-] {
	T^{N-1}A \save+<-4pc,1pc>*\hbox{\it roof}
	\ar[]
	\restore \\
	{T^{N-2}A}
	\save*{}
	\restore \\
	{\vdots}
	\save*{}
	\restore \\
	TA
	\restore  \\ A\save+<-4pc,-1pc>*\hbox{\it base}
	\ar[]
	\restore}
$$

\medskip

A set $\mathfrak{t}$ is called a {\em tower} if it is a disjoint union of columns $$\mathfrak{c_i}=\{ A^i, TA^i,\ldots, T^{N_i-1}A^i \},\quad i=1,\ldots, l.$$ The union of the bases $\bigcup_{i=1}^l A^i$ is the {\em base} of $\mathfrak{t}$ and the union of the roofs $\bigcup_{i=1}^l T^{N_i-1}A^i$ is the {\em roof} of $\mathfrak{t}$.

\medskip

\subsection{Kakutani-Rokhlin towers}
For an ergodic system $(X,\X,\mu, T)$, let $B\in \X$ be a set with positive measure.
Then it is clear that \(\bigcup_{n\geq 0}T^nB=X\) (mod\ $\mu$). Define the {\em return time function} $r_B: B \rightarrow \N\cup\{\infty\}$ by
\[r_B(x)=\min \ \{n\geq 1:T^n x \in B\}\]
when this minimum is finite and $r_B(x)=\infty$ otherwise. Let $B_k=\{x\in B:r_B(x)=k\}$
and note that by Poincar\'{e}'s recurrence theorem $B_\infty$ is a null set. Let $\mathfrak{c}_k$
be the column $\{B_k,TB_k,\ldots,T^{k-1}B_k\}$. We call the tower
$$\mathfrak{t}=\mathfrak{t}(B)=\{\mathfrak{c}_k:k=1,2...\}$$ the {\em Kakutani tower over $B$}. If the Kakutani tower over B
has finitely many columns ({\em i.e.} the function $r_B$ is bounded) we say that $B$ has a {\em finite height}
and we call the Kakutani tower over $B$ a {\em Kakutani-Rokhlin tower} or {\em a K-R tower}.
The number $\max r_B$ is called the {\em height} of B or the {\em height} of the K-R tower.

\medskip

We will need the following useful lemma (see \cite{G,W, Weiss00} for a proof), which is a special case of the Alpern Lemma \cite{Alpern}.

%A useful result concerning Kakutani-Roklhin towers is the Alpern Lemma which %states than we can also build towers with a prescribed height. Namely, for any %$H\in \N$ one can build a Kakutani-Roklhin with heights $H$ and $H+1$.

\begin{lem}\label{KR tower}
	Let $(X,\mathcal{X},\mu,T)$ be a non-periodic ergodic system.
	For any positive integers $N_1,N_2$ with  $gcd(N_1,N_2)=1$, there exists a set $C$ of finite height such that the K-R tower \(\mathfrak{t}(C)\) satisfies range $r_C=\{N_1,N_2\}$.
\end{lem}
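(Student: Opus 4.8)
The statement asks for a base $C$ whose return time $r_C$ takes only the values $N_1$ and $N_2$; equivalently, the K-R tower over $C$ should split $X$ into columns of heights $N_1$ and $N_2$ only. The plan is to produce such a $C$ by recutting the columns of a suitably chosen Kakutani tower. The key number-theoretic input is the Frobenius (Chicken McNugget) theorem: since $\gcd(N_1,N_2)=1$, every integer $n\geq L:=(N_1-1)(N_2-1)$ can be written as $n=aN_1+bN_2$ with $a,b\in\N\cup\{0\}$. Hence, once I have a Kakutani tower all of whose columns have height at least $L$, each column can be sliced into consecutive blocks of heights $N_1$ and $N_2$, and the union of the bases of these blocks will be the desired set $C$.

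To force the columns to be tall, I would first apply the classical Rohklin lemma to $T$ (which is non-periodic and ergodic) to obtain a set $F$ with $\mu(F)>0$ and $F,TF,\ldots,T^{L-1}F$ pairwise disjoint. The crucial consequence of this disjointness is that $r_F(x)\geq L$ for every $x\in F$: indeed $T^ix\in T^iF$ and $T^iF\cap F=\emptyset$ for $1\leq i\leq L-1$, so the first return to $F$ cannot happen before time $L$. By Poincar\'e recurrence $r_F<\infty$ a.e. on $F$, so the Kakutani tower $\mathbf{t}(F)=\{\mathfrak{c}_k:k\geq L\}$ over $F$ covers $X$ (mod $\mu$) and every column that actually occurs has height $k\geq L$. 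I emphasize that $r_F$ need \emph{not} be bounded at this stage, since the boundedness of $r_C$ will come out of the recutting.

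Next I would recut column by column. Fix a column $\mathfrak{c}_k=\{F_k,TF_k,\ldots,T^{k-1}F_k\}$ with $F_k=\{x\in F:r_F(x)=k\}$, and choose $a_k,b_k\geq 0$ with $k=a_kN_1+b_kN_2$. Inside this column I insert new base levels at the floors $0,N_1,\ldots,a_kN_1,\ a_kN_1+N_2,\ldots,a_kN_1+(b_k-1)N_2$, i.e. I take the sets $T^{jN_1}F_k$ and $T^{a_kN_1+jN_2}F_k$ for the appropriate ranges of $j$, and I let $C$ be the union over all $k$ of all these new bases. Two consecutive new base levels within a column differ by exactly $N_1$ or $N_2$; and from the topmost new base $T^{a_kN_1+(b_k-1)N_2}F_k$ the map $T^{N_2}$ lands on $T^kF_k\subset F$, that is, on the base (floor $0$) of the next column, which again lies in $C$. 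Thus $r_C(x)\in\{N_1,N_2\}$ for every $x\in C$, the height of $\mathbf{t}(C)$ is at most $\max(N_1,N_2)$, and the new blocks tile exactly the region covered by $\mathbf{t}(F)$, so $\mathbf{t}(C)$ covers $X$ (mod $\mu$).

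The genuinely delicate point is the bookkeeping at the top of each column: one must verify that after the final full block the remaining $N_2$ steps really return to the base of some Kakutani column, so that the first-return structure of $C$ closes up, and that all of this is consistent modulo null sets across the infinitely many occurring heights $k$. I expect this measure-theoretic verification to be the main, though essentially routine, obstacle; the conceptual content is concentrated in pairing the Rohklin lemma (to guarantee $r_F\geq L$) with the Frobenius representation (to recut each column exactly into $N_1$- and $N_2$-blocks).
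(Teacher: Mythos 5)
Your proof is correct, but note that the paper contains no internal proof to compare against: Lemma~\ref{KR tower} is quoted as a special case of the Alpern lemma with a pointer to \cite{G,W,Weiss00}, and your argument --- Rokhlin lemma to force $r_F\ge L=(N_1-1)(N_2-1)$ on a base $F$, the Kakutani tower over $F$, then Frobenius recutting of each column into blocks of lengths $N_1$ and $N_2$ --- is exactly the standard proof found in those references, so you have supplied a valid self-contained proof rather than a divergent one. Two points you deferred as ``routine'' deserve one explicit line each. First, the claim that $r_C$ takes \emph{no} values other than $N_1,N_2$ rests on the fact that the levels $T^jF_k$, $0\le j<k$, of the Kakutani tower are pairwise disjoint and partition $X$ mod $\mu$: a point in a non-designated level of column $k$ therefore lies in no designated level of any column, so the first return from a designated floor is exactly the next designated floor, and from the top designated floor the first return occurs precisely at $T^kF_k\subset F\subset C$ (the base, floor $0$, of whatever column the point enters next); this disposes of the ``closing up'' worry at the roof. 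Second, your list of designated floors has a trivial off-by-one when $b_k=0$ (then $a_kN_1=k$ is not a floor; take $0,N_1,\dots,(a_k-1)N_1$ with final return $N_1$) and degenerates correctly when $b_k=1$ (second group empty, final return $N_2$ from floor $a_kN_1$). With these observations the construction gives $r_C\in\{N_1,N_2\}$ mod $\mu$, hence a K-R tower of height $\max(N_1,N_2)$, as required; neither point is a genuine gap.
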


\subsection{Refining a tower according to a partition}
Let $(X,\X,\mu,T)$ be a measure preserving system. Let $\mathfrak{t}$ be a tower with columns $\{\mathfrak{c}_k:k\in K\}$
($K$ is finite or countable) and base $B=\bigcup_{k\in K}B_k\subseteq X$. Given a partition
(finite or countable) $\a$ of $X$, we define an equivalence relation on $B$ as
follows: $x\sim y$ iff $x$ and $y$ are in the same base $B_k$ and for every
$0\le j< N_k$, $T^jx$ and $T^jy$ are in the same elements of $\a$, {\em i.e.}
$x$ and $y$ have the same $(\a, N_k)$-name. Now we consider each
equivalence class $B_{k,{\bf a}}$, with {\bf a} an $(\a, N_k)$-name,
as a base of the column $\mathfrak{c}_{k, {\bf a}}=\{B_{k,{\bf a}}, TB_{k,{\bf a}},
\ldots,T^{N_k-1} B_{k,{\bf a}}\}$ and say that the resulting tower $\mathfrak{t}_\a=
\{\mathfrak{c}_{k,{\bf a}}: {\bf a}\in \a^{N_k}, k\in K\}$
is the {\em tower $\mathfrak{t}$ refined according to $\a$}. We usually refer to the columns of the refined tower as {\em pure columns}.
%When $\a=\{C, X\setminus C\}$, we write $\mathfrak{t}_\a =\mathfrak{t}_C$ and say that %$\mathfrak{t}$ is {\em refined according to $C$}.

%\medskip

\subsection{Symbolic dynamics}

Let $\Sigma$ be a set. Let $\Omega={\Sigma}^{\Z}$ be the
set of all sequences $\omega=\ldots {\omega}_{-1}{\omega}_0{\omega}_1 \ldots=({\omega}_n)_{n\in \Z}$, ${\omega}_n \in
\Sigma$, $n \in \Z$, endowed with the product topology. The shift map $\sigma: \Omega \rightarrow \Omega$
is defined by $(\sigma \omega)_n = {\omega}_{n+1}$ for all $n \in \Z$. The
pair $(\Omega,\sigma)$ is called the {\em full shift} over $\Sigma$. Any subsystem (closed and invariant subset) of $(\Omega, \sigma)$
is called a {\em subshift}.
%Similarly we can replace $\Z$ by $\Z_+=\{0,1,2,\ldots \}$, and $\sigma$ will be not a
%homeomorphism but a surjective map.

Each element of ${\Sigma}^{\ast}= \bigcup_{k \ge 1} {\Sigma}^k$ is called {\em a word} or {\em a block} (over $\Sigma$). If $A=a_1\ldots a_n$, we use $|A|=n$ to denote its length. If $\omega=\cdots \omega_{-1} \omega_0 \omega_1 \cdots \in \Omega$ and $a \le b \in \Z$, then
$\omega[a,b]=:\omega_{a} \omega_{a+1} \cdots \omega_{b}$ is a $(b-a+1)$-word occurring in
$\omega$ starting at place $a$ and ending at place $b$.
%$ will denote the That is, if $\omega$, then $\omega[a,b]
Similarly we define $A[a,b]$ when $A$ is a word. A word $A$ {\em appears} in the word $B$ if there are some $a\le b$ such that
$B[a,b]=A$.

For $n\in\N$ and words $A_1,\ldots, A_n$, we denote by $A_1\ldots A_n$ the concatenation of $A_1,\ldots, A_n$.
When $A_1=\ldots=A_n=A$ denote $A_1\ldots A_n$ by $A^n$. % by $\underbrace{AA\cdots A}\limits_n$ by $A^n$
%for $n\in\N$ (for example $0^n=\underbrace{00\cdots 0}\limits_n$),
%and by $AB$.
If $(X,\sigma)$ is a subshift , let $[i]=[i]_X=\{\omega\in X:\omega(0)=i\}$ for $i\in \Sigma$, and
$[A]=[A]_X=\{\omega\in X:\omega_0 \omega_1\cdots \omega_{(|A|-1)}=A\}$ for any word $A$.

%\medskip

%Note that $\a\succ \b$ if and only if $\a\vee \b=\a$.

%\medskip
\subsection{Symbolic representation}

Let $(X,\X,\mu,T)$ be an ergodic measure preserving system. Given a measurable function $f\colon X\to \Sigma\subseteq [0,1]$, one can define the itinerary homomorphism $f^{\infty}$ from  X to $\Omega:=[0,1]^{\Z}$ given by $f^{\infty}(x)=\omega$, where
\[\omega_n=f(T^nx).\]
The distribution of the stochastic process $(f^{\infty})_*(\mu)$ (defined by $(f^{\infty})_*(\mu)(A)=\mu((f^{\infty})^{-1}(A))$, for each Borel subset $A\subset[0,1]^{\Z}$), is denoted by $\rho(X,f)$ and we call it the {\em representation measure given by f} of $(X,T)$. When the system under consideration $(X,\X,\mu,T)$ is fixed, we just write $\rho$ instead of $\rho(X,f)$ for convenience.

Let
$$X_f={\rm supp}\big((f^{\infty})_*(\mu)\big)={\rm supp} (\rho).$$
Then we get a homomorphism $f^{\infty}\colon (X,\X,\mu,T)\rightarrow (X_f, \X_f, \rho, \sigma)$.
This homomorphism is called the {\em representation} of the process $(X,f)$.

A very important case is when we consider a finite partition $\a=\{A_j\}_{j\in \Sigma}$ (we assume $\mu(A_j)>0$ for all $j$). Here $\Sigma\subset [0,1]$ is a subset of real numbers (not necessarily integers). We think of the partition $\a$ as the function $f_{\a}$ defined as $f_{\a}(x)=j$ if $x\in A_j$. Equivalently, when $f$ has finitely many values $\{a_1,\ldots,a_k\}$ we can think of $f$ as the function given by the partition $\a=\{A_j\}_{j\in \Sigma}$ where $A_j=f^{-1}(a_j)$. Let $(X,\a)$ denote the representation $(X,f_{\a})$ and we call it the {\em symbolic representation} given by the partition $\a$.

This will not be a model for $(X,\X,\mu,T)$ unless $\bigvee_{i=-\infty}^\infty T^{-i}\a=\X$
modulo null sets.

%, but in any case this does give a model for a non-trivial factor of $X$.

%\medskip

%When there is no room for confusion we may write, for a point $x\in X$ and integers $m<n$, $x^n_m$ %for the sequence
%\begin{equation*}
%    (\phi_\a(x))_m^n=(\phi_a(x)_{m},\phi_a(x)_{m+1},\ldots,\phi_a(x)_{n})=(\w_m\w_{m+1}\ldots %\w_n).
%\end{equation*}
%For ${\bf a}\in \Sigma^\Z$, we also denote ${\bf a}^n_m$ the set $\{x\in X: \phi_\a(x)^n_m={\bf %a}^n_m\}$, and $\mu({\bf a}^n_m)$ for $\rho({\bf a}^n_m)$.

%\medskip

\subsection{Copying names}  \label{Sec:PaintingNames}

An important way to produce partitions (equivalently, finite valued functions) is by copying or painting names on towers.

%Let $\alpha$ be a partition of $X$, where each atom $A_a$ has a label $a\in \R$. Let $\Sigma$ denote the set of labels. For a column $\mathfrak{c}= \{B,TB,T^2B,\ldots T^{N-1}B \}$ and $(a_1,\ldots,a_N)\in \Sigma^N$, coping the name $(a_0,\ldots,a_{N-1})$ to the column $\mathfrak{c}$ means that we redefine the partition $\alpha$ in the column $\mathfrak{c}$ in such a way that $T^{i}A \subseteq A_{a_{i}}$ for $i=0,\ldots,N-1$.

If $\mathfrak{c}=\{T^jB\}_{j=0}^{N-1}$ is a column and ${\bf a}\in \Sigma^N$ then
{\em copying the name ${\bf a}$ on the column $\mathfrak{c}$} means that on
$\bigcup_{j=0}^{N-1}T^jB$ we define a partition (may not be on the whole space)  by letting
\begin{equation*}
A_k=\bigcup\{T^jB:{\bf a}_j=k\}, \quad k\in \Sigma.
\end{equation*}
If there is a tower $\mathfrak{t}$ with $q$ columns $\mathfrak{c}_i=\{T^jB_i\}_{j=0}^{N_i-1}$, and
$q$ names ${\bf a}(i)\in \Sigma^{N_i}, i=1,\ldots,q$, then copying these names on $\mathfrak{t}$
means we copy each name ${\bf a}(i)$ on column $\mathfrak{c}_i$, {\em i.e.} we define a partition by
\begin{equation*}
A_k=\bigcup\{T^jB_i: {\bf a}(i)_j=k, i=1,\ldots, q\}, \quad k\in \Sigma.
\end{equation*}
These partitions can be extended to a partition $\a=\{A_{a_1}, \ldots, A_{a_l}\}$ of the whole space by
assigning, for example, the value $a_1$ to the rest of the space.

\section{Proof of Theorem \ref{THM1}}

In this section we prove Theorem \ref{THM1}. For the sake of clarity, we divide the proof into two steps. First, we prove that we can realize an ergodic rigid system in a uniformly rigid topological dynamical system and then we show how to add the (topologically) weakly mixing condition.

\medskip

Let $(X,\X,\mu,T)$ be an ergodic rigid system with rigidity sequence $(n_i)_{i\in \N}$. { We start considering a special topological model: we may assume, by \cite{Le}, that $(X,T)$ is a minimal (strongly) mixing subshift (in fact, since a rigid system has zero entropy, we may consider a subshift over two symbols \cite{W, Ho}, but we do not need this property).}
%, and there exists a clopen generator for $T$.

\subsection{Proof strategy}
First, it is worth noting that a model given by a finite partition does not fit to our purposes as the following remark shows:

\begin{rem}
	Let $(X,\sigma)$ be a non-periodic (equivalently infinite) subshift. Then $(X,\sigma)$ is not rigid.
\end{rem}

\begin{proof}
	It is well-known that infinite symbolic systems have always a forward asymptotic pair (see \cite{Aus} Chapter 1 for example), {\em i.e.} there exist $\omega, \omega'\in X$ such that $\omega_{0}\neq \omega'_{0}$ and $\omega_n=\omega'_n$ for all $n\geq 1$. If $(X,\sigma)$ is rigid for the sequence $(n_i)_{i\in \N}$ then  $\sigma^{n_i}\omega\to \omega$ and $\sigma^{n_i}\omega'\to \omega'$ which implies that $\omega_0=\omega_{n_i}=\omega'_{n_i}=\omega'_{0}$, a contradiction.
\end{proof}

The proof of Theorem \ref{THM1} relies in the idea of building a topological model for an ergodic system using itineraries of a given function. This idea was already used in \cite{Ho,W} to find special models for systems with zero entropy.  Let $f\colon X\to [0,1]$ be a measurable function. Recall that the itinerary function $f^{\infty}\colon X\to [0,1]^{\Z}$ is $$f^{\infty}(x)=(\ldots,f(T^{-2}x),f(T^{-1}x),f(x),f(Tx),f(T^2x),\ldots)$$ and that the topological system associated to $f$ is the support of the measure $(f^{\infty})_*(\mu)$ in $[0,1]^{\Z}$ endowed with the shift action.

The function $f\colon X\to [0,1]$ generates for $T$ if the $\sigma$-algebra generated by the functions $f\circ T^n$, $n\in \Z$ is all of $\X$ (mod null sets). This is equivalent to that there exists a set of full measure $A$ on which the itinerary function $f^{\infty}$ is injective (see \cite{Sh} Chapter 1 for a reference). Thus, when $f$ generates for $T$ we have that the itinerary function $f^{\infty}$ is an isomorphism between $(X,\X,\mu,T)$ and $(X_f, \X_f, \rho, \sigma)$.

The general strategy consists in finding a sequence of functions $(f_i)_{i\in \N}$, where $f_{i+1}$ and $f_i$ differ in a set of small measure so that there exists a pointwise  limit function $f$. Suitable properties to the functions $f_i$ are required so that the corresponding topological system associated to $f$ satisfies the properties we are looking for.

Each $f_i$ will generate for $T$, and we will guarantee that $f$ generate for $T$ by controlling the speed of convergence of $f_i$ to $f$. The $f_i$'s will be continuous and each one will take only finitely many values, so we may identify them with finite partitions $\a_i$ of $X$ into clopen sets, where $f_i: X\rightarrow \{a_1,a_2,\ldots, a_{m_i}\}\subseteq [0,1]$ and $\a_i=\{A_1,\ldots,A_{m_i}\}$ with $A_j=f^{-1}_i(a_j)$.

In our case, the condition we ask is that any function $f_i$ is close to be uniformly rigid. To do this we introduce the following definition.

\begin{defin}
	We say that $f\colon X \to [0,1]$ is {\em $\epsilon$-good at $n$} if
	\[ \|f-f\circ T^{n}\|_{\infty} < \epsilon. \]
	
\end{defin}
Here $\|\cdot \|_{\infty}$ stands for the essential supremum norm. Of course if $f$ is continuous this coincides with the supremum norm.

Let $(K_i)_{i\in \N}$ be a sequence of positive integer numbers such that $\sum _{i=1}^\infty \frac{1}{K_i} < \infty$.

Our goal is to build a sequence of generating and continuous functions $(f_i)_{i\in \N}$ and a subsequence $(n_i')_{i\in \N}$ of $(n_i)_{i\in \N}$ such that
\begin{equation} \label{EQ:ConditionRigidity}
\begin{split}
f_i \quad \text{ is } \quad \left( \sum_{l=j}^{i}\frac{1}{K_l} \right) \text{-good at } n'_j \text{ for any } j\leq i \\
\mu(\{f_i\neq f_{i+1}\})< r_i,
\end{split}
\end{equation}
where $r_i$ goes fast enough to 0 (for instance $r_i=2^{-i}$).  In this case, we say that the sequence {\em $(f_i)_{i\in \N}$ is good for the sequence $(K_i)_{i\in \N}$}.

We will also impose that the cardinality of the image of $f_{i+1}$ is strictly larger than the one of $f_i$.
This guarantees that the pointwise limit of $f_i$ is well defined and also generates for $T$. To see that $f=\lim f_i$ generates for $T$, remark that since the functions $f_i$ are generating, there exists a set of full measure $A$ where all $f_i^{\infty}$ are injective (see \cite{Sh} Chapter 1 for example). The Borel-Cantelli Lemma ensures that in a set of full measure $B$, $x\in B$ implies that $f^{\infty}(x)=f^{\infty}_i(x)$ for some $i\in \N$. So if $x,y \in A\cap B$ and $f^{\infty}(x)=f^{\infty}(y)$, then there exists $i,j$ such that $f^{\infty}_i(x)=f^{\infty}_{j}(y)$. We can assume $j>i$, since $i=j$ is not possible by the injectivity of $f^{\infty}_i$ in $A$. There is an open subset of $X$ where the value of $f_j$ is different from all values of $f_i$ (recall that the functions are continuous). The minimality of $(X,T)$ implies that for some $n$, $f_i(T^nx)\neq f_j(T^n y)$. This shows that $f^{\infty}$ is injective in a set of full measure and so $f$ generates for $T$.

\subsection{Some Facts}

Our proof is based on doing modifications of a tall enough tower. We modify these towers by taking averages between given portions of a subcolumn. We formalise this idea with the next definition.

{ Let $A=a_1\ldots a_n$ and $B=b_1\ldots b_n$ be two blocks and $\lambda\in \R$. Write $\lambda A=(\lambda a_1)\ldots (\lambda a_n)$ and $A\pm B=(a_1\pm b_1)\ldots (a_n\pm b_n)$.}

\begin{defin}
	Let $A=a_1\ldots a_n \in [0,1]^n$, $B=b_1\ldots b_n\in [0,1]^n$ and $K\in \N$. We say that $C=c_1\ldots c_{(K+1)n}$ is a {\em transition} from $A$ to $B$ in $K$-steps if $C$ is the concatenation of the blocks $A+\frac{j}{K}(B-A)$ for $j=0,\ldots,K$.
	%Here the addition of two blocks (of the same length) is the addition coordinate by coordinate.
\end{defin}

\begin{rem}
	$A$ and $B$ represent two given subcolumns of length $n$ and $C$ represents a subcolumn of length $(K+1)n$ where the first and last $n$ levels are $A$ and $B$ respectively.
\end{rem}

\begin{lem} \label{Lem:goodTransition}
	Let $A=a_1\ldots a_n,$ $B=b_1 \ldots b_n \in [0,1]^{n}$ and let $C=c_1\ldots c_{(K+1)n} \in [0,1]^{(K+1)n}$ be the transition from $A$ to $B$ in $K$-steps. Then, for any $l=1,\ldots,Kn$ we have that \[ |c_{l}-c_{l+n}|\leq \frac{1}{K} \]
	
\end{lem}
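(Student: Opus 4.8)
The plan is to translate the definition of a transition into an explicit coordinate formula for the entries of $C$, and then read off the desired difference directly; the claimed bound should fall out of the linear-interpolation structure without any real estimation.

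First I would set up the indexing. Every position $l\in\{1,\ldots,(K+1)n\}$ can be written uniquely as $l=jn+i$ with $j\in\{0,\ldots,K\}$ and $i\in\{1,\ldots,n\}$. By the definition of $C$ as the concatenation of the blocks $A+\frac{j}{K}(B-A)$ over $j=0,\ldots,K$, the $l$-th entry of $C$ is exactly the $i$-th coordinate of the $j$-th block, that is
\[ C_l = a_i + \frac{j}{K}(b_i-a_i). \]

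Next I would fix $l\in\{1,\ldots,Kn\}$ and write $l=jn+i$ as above. The constraint $l\le Kn$ forces $j\le K-1$, so the shifted position $l+n=(j+1)n+i$ still lies in $\{1,\ldots,(K+1)n\}$, sits in the same coordinate $i$, and advances the block index by exactly one. Subtracting the two instances of the formula gives
\[ C_{l+n}-C_l = \left(a_i+\frac{j+1}{K}(b_i-a_i)\right)-\left(a_i+\frac{j}{K}(b_i-a_i)\right) = \frac{1}{K}(b_i-a_i), \]
and since $a_i,b_i\in[0,1]$ we have $|b_i-a_i|\le 1$, whence $|C_l-C_{l+n}|=\frac{1}{K}|b_i-a_i|\le\frac{1}{K}$, as required.

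The only point requiring care — if one can call it an obstacle — is the index bookkeeping: verifying the uniqueness of the decomposition $l=jn+i$ and checking that the hypothesis $l\le Kn$ is precisely what prevents the shifted position $l+n$ from overflowing past the final block $j=K$. Once these are in place the conclusion is immediate, so I anticipate no genuine difficulty in the argument.
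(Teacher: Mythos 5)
Your proof is correct and is essentially identical to the paper's: both write $C_l$ and $C_{l+n}$ as the $i$-th coordinates of consecutive interpolating blocks (your $a_i+\frac{j}{K}(b_i-a_i)$ is the paper's $\frac{K-j}{K}a_p+\frac{j}{K}b_p$) and subtract to get $\frac{1}{K}(b_i-a_i)$, bounded by $\frac{1}{K}$ since $a_i,b_i\in[0,1]$. Your extra care with the uniqueness of the decomposition $l=jn+i$ and the non-overflow condition $l\le Kn$ is a welcome but minor elaboration of what the paper leaves implicit.
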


\begin{rem}
	This lemma shows that if $K$ is big enough then we have a ``smooth'' $K$-step transition between two  blocks of the same lengths, which will be useful to ensure rigidity.
\end{rem}

\begin{proof}[Proof of Lemma \ref{Lem:goodTransition}]
	We have that there exist $j\leq K-1$ and $1\leq p\leq n$ such that $c_{l}=\frac{K-j}{K}a_p+\frac{j}{K}b_p$ and $c_{l+n}=\frac{K-j-1}{K}a_p+\frac{j+1}{K}b_p$. Thus \[ c_{l}-c_{l+n}= \frac{a_p-b_p}{K} \] and the result follows.
\end{proof}

The next lemma shows that if two blocks have a similar top and bottom, then when performing a transition between them, the top of a block and the bottom of the consecutive one have a ``smooth'' transition. This condition will be useful in order to get the first property in \eqref{EQ:ConditionRigidity}.

\begin{lem} \label{Lem:Transition}
	Let $A=a_1 \ldots a_n$,  $B=b_1\ldots b_n\in [0,1]^{n}$ and let $C=c_1\ldots c_{(K+1)n} \in [0,1]^{(K+1)n}$ be the transition from $A$ to $B$ in $K$-steps. Let $n/2\geq p\geq l \geq 0$. If  $\vert a_{n-p+l}-a_{l} \vert \leq \delta $ and $\vert b_{n-p+l}-b_{l} \vert\leq \delta$ (i.e. $A$ and $B$ have similar top and bottom)
	then for every $j=0,\ldots K-1$ we have that $\vert c_{jn+n-p+l}- c_{(j+1)n+l} \vert \leq \delta+\frac{1}{K}$.
\end{lem}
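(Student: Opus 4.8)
The plan is to read off both entries of $C$ directly from the definition of a transition and then estimate their difference by a single algebraic regrouping. First I would locate the two indices among the $K+1$ blocks $A+\frac{j}{K}(B-A)$, $j=0,\ldots,K$, that make up $C$. Since block $j$ occupies the positions $jn+1,\ldots,(j+1)n$ and is given entrywise by $c_{jn+r}=\frac{K-j}{K}a_r+\frac{j}{K}b_r$ for $1\le r\le n$, the index $jn+n-p+l$ is the entry in position $n-p+l$ of block $j$ — a legitimate position because $n/2\le n-p\le n-p+l\le n$ under the hypothesis $n/2\ge p\ge l\ge 1$ — while $(j+1)n+l$ is the entry in position $l$ of block $j+1$. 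This yields the explicit expressions
\[ c_{jn+n-p+l}=\frac{K-j}{K}\,a_{n-p+l}+\frac{j}{K}\,b_{n-p+l},\qquad c_{(j+1)n+l}=\frac{K-j-1}{K}\,a_{l}+\frac{j+1}{K}\,b_{l}. \]

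Next I would subtract these and regroup so as to separate the terms controlled by the near-periodicity hypotheses from the unavoidable interpolation error. The key observation is that the weights attached to the $a$'s (respectively the $b$'s) in the two expressions differ by exactly $\frac{1}{K}$, since the two entries sit in consecutive blocks. Writing $\frac{K-j}{K}=\frac{K-j-1}{K}+\frac{1}{K}$ and $\frac{j+1}{K}=\frac{j}{K}+\frac{1}{K}$, I can peel off precisely one copy of $\frac{1}{K}$ from each mismatched weight and rearrange the difference as
\[ c_{jn+n-p+l}-c_{(j+1)n+l}=\frac{K-j-1}{K}\bigl(a_{n-p+l}-a_{l}\bigr)+\frac{j}{K}\bigl(b_{n-p+l}-b_{l}\bigr)+\frac{1}{K}\bigl(a_{n-p+l}-b_{l}\bigr). \]

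The estimate then follows from the triangle inequality. The first two terms are each governed by a hypothesis, $|a_{n-p+l}-a_l|\le\delta$ and $|b_{n-p+l}-b_l|\le\delta$, and their coefficients sum to $\frac{K-j-1}{K}+\frac{j}{K}=\frac{K-1}{K}\le 1$, so together they contribute at most $\delta$. The remaining cross term is bounded using only $a_{n-p+l},b_{l}\in[0,1]$, giving $\frac{1}{K}\,|a_{n-p+l}-b_{l}|\le\frac{1}{K}$. Adding these produces $|c_{jn+n-p+l}-c_{(j+1)n+l}|\le\delta+\frac{1}{K}$ for every $j=0,\ldots,K-1$ (the restriction $j+1\le K$ keeps block $j+1$ inside $C$), which is the claim.

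The computation is entirely routine, and the only step requiring genuine care is the regrouping: one must charge the full cross-block discrepancy $a_{n-p+l}-b_{l}$ — the honest transition error — to the single factor $\frac{1}{K}$, so that the two remaining comparisons are between entries of the same word ($a$ with $a$, $b$ with $b$), carry total weight at most $1$, and can therefore absorb the $\delta$'s without any loss. A secondary bookkeeping point is simply to confirm the index ranges, so that both entries are bona fide interior positions of their respective blocks.
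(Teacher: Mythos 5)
Your proof is correct and takes essentially the same route as the paper: both compute $c_{jn+n-p+l}$ and $c_{(j+1)n+l}$ explicitly from the interpolation formula, regroup the difference so that the hypotheses control same-word comparisons with total weight at most $1$, and charge the cross-block discrepancy to a single $\frac{1}{K}$ factor. The only difference is cosmetic --- you peel off $\frac{1}{K}\bigl(a_{n-p+l}-b_l\bigr)$ where the paper writes the remainder as $-\frac{1}{K}\bigl(b_l-a_l\bigr)$ with full weight $\frac{K-j}{K}$ on the $a$-difference --- and your observation that the case $l=0$ is degenerate (forcing $l\ge 1$) is a fair reading of the indexing.
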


\begin{rem}
	We think of the term $c_{jn+n-p+l}$ as some level close to the top of the block $A+\frac{j}{K}(B-A)$ while  the term $c_{(j+1)n+l}$ is a level close to the bottom of the block $A+\frac{j+1}{K}(B-A)$.  
\end{rem}

\begin{proof}
	By definition we have that
	\begin{align*}
	c_{jn+n-p+l}- c_{(j+1)n+l}  & = a_{n-p+l}+\frac{j}{K}(b_{n-p+l}-a_{n-p+l})-a_{l}-\frac{j+1}{K}(b_{l}-a_{l}) \\
	& = \frac{K-j}{K}(a_{n-p+l}-a_{l})+\frac{j}{K}(b_{n-p+l}-b_{l})-\frac{b_l-a_l }{K}
	\end{align*}
	and the result follows.
\end{proof}

\begin{lem} \label{Lem:RigidPowers}
	Let $(X,\X, \mu,T)$ be a measure preserving rigid system. Then for each $k\in \N$, $(X,\X,\mu,T^k)$ is also rigid.
\end{lem}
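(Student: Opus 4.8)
The plan is to verify that the \emph{same} sequence $(n_i)_{i\in\N}$ that witnesses rigidity of $(X,\X,\mu,T)$ also witnesses rigidity of $(X,\X,\mu,T^k)$. It is convenient to work with the Koopman operator: let $U\colon L^{2}(\mu)\to L^{2}(\mu)$ be the unitary operator given by $Uf=f\circ T$, so that $U^{n}f=f\circ T^{n}$. By definition, rigidity of $(X,\X,\mu,T)$ for $(n_i)$ means precisely that $\|f-U^{n_i}f\|_{2}\to 0$ as $i\to\infty$ for every $f\in L^{2}(\mu)$. Since the Koopman operator of $T^{k}$ is $U^{k}$ and $(T^{k})^{n_i}=T^{kn_i}$, proving rigidity of $(X,\X,\mu,T^k)$ along $(n_i)$ amounts to showing that $\|f-U^{kn_i}f\|_{2}\to 0$ for every $f\in L^{2}(\mu)$.

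The key step is a telescoping estimate. Fix $f\in L^{2}(\mu)$. Writing
\[
f-U^{kn_i}f=\sum_{m=0}^{k-1}\big(U^{mn_i}f-U^{(m+1)n_i}f\big)=\sum_{m=0}^{k-1}U^{mn_i}\big(f-U^{n_i}f\big),
\]
and using that each $U^{mn_i}$ is an isometry of $L^{2}(\mu)$, the triangle inequality gives
\[
\|f-U^{kn_i}f\|_{2}\le\sum_{m=0}^{k-1}\|U^{mn_i}(f-U^{n_i}f)\|_{2}=k\,\|f-U^{n_i}f\|_{2}.
\]
Since the right-hand side tends to $0$ by the rigidity of $(X,\X,\mu,T)$, I conclude that $\|f-U^{kn_i}f\|_{2}\to 0$. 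As $f$ was arbitrary, $U^{kn_i}=(U^{k})^{n_i}\to\id$ in the strong operator topology, which is exactly the statement that $(X,\X,\mu,T^k)$ is rigid for the sequence $(n_i)_{i\in\N}$.

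I do not expect a genuine obstacle here; the only point worth flagging is the temptation to think one must first extract from $(n_i)$ a subsequence whose terms are multiples of $k$ (for instance via a pigeonhole argument on residues modulo $k$). This is unnecessary: rigidity of $T^{k}$ is tested on the exponents appearing in $(T^{k})^{n_i}=T^{kn_i}$, which are automatically multiples of $k$, and the displayed telescoping bound controls them directly. The factor $k$ lost in the estimate is harmless, since the bound still tends to $0$.
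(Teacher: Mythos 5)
Your proof is correct and follows essentially the same route as the paper: the authors also telescope $f - f\circ T^{kn_i}$ into $k$ differences $f\circ T^{n_i j} - f\circ T^{n_i(j+1)}$ and use measure preservation (your isometry of the Koopman operator) to bound the norm by $k\,\|f - f\circ T^{n_i}\|_2 \to 0$. Your remark that no subsequence extraction is needed is a sound clarification but adds nothing beyond the paper's argument.
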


\begin{proof}
	Let $(n_i)_{i\in \N}$ be a rigidity sequence for $T$ and let $f\in L^{2}(\mu)$. We have that $\|f-f\circ T^{n_ik}\|_2\leq \sum_{j=0}^{k-1} \|f\circ T^{n_ij}-f\circ T^{n_i(j+1)}\|_2=k\| f - f\circ T^{n_i}\|_2 \to 0$. We conclude that $(n_i)_{i\in \N}$ is also a rigidity sequence for $T^k$.
\end{proof}

\subsection{Proof of Theorem \ref{THM1}: Getting a uniformly rigid model}

%\begin{proof}[Proof of Theorem \ref{THM1}]

%Let $(X,\mu,T)$ be a rigid system with rigidity sequence $(n_k)_{k\in \N}$. By \cite{Le,W} we can assume that $X$ is a cantor set and $T$ is a (strongly) mixing homeomorphism of $X$.

We now proceed to prove Theorem \ref{THM1}. {Recall that we assume that $(X,T)$ is a minimal (strongly) mixing subshift and we consider a sequence of positive number $(r_i)_{i\in \N}$ converging fast enough to 0 (for instance $r_i=2^{-i}$).
	
	Let $(K_i)_{i\in \N}$ be an increasing sequence of positive integers such that $\sum \frac{1}{K_i} < \infty$. For simplicity we assume $K_0=1$.
	%In order to not get confused with the indices, in this section we write the rigidity sequence as $(n_k)_{k\in \N}$.
	We construct the sequence of functions $(f_i)_{i\in \N}$ good for $(K_i)_{i\in \N}$ inductively.
	
	Let $\a_0=\{A_1,\ldots,A_{m_0}\}$ be a clopen generator for $T$, and $a_1,\ldots,a_{m_0}$ be real numbers in $[0,1]$. Let $f_0: X\rightarrow \{a_1,a_2,\ldots, a_{m_0}\}\subseteq [0,1]$ such that $A_j=f^{-1}_i(a_j)$ for $1\le j\le m_0$.} It is a continuous function and since $K_0=1$, we have that $f_0$ satisfies trivially the properties we require for any $n_0'\in (n_k)_{k\in \N}$ (we consider values in $[0,1]$).
To illustrate our method and make the proof clearer we show how to obtain $f_1$ from $f_0$.

\medskip

\noindent {\bf Step $1$:}
Let $\alpha_0$ denote the partition associated to the different values of $f_0$ ({\em i.e.} $\alpha_0$ is the  canonical partition at the origin). Consider the integer $K_1$ and the positive number $r_1$. Since $f_0$ has finitely many values, there exists a constant $c_0>0$ such that $|f_0(x)-f_0(y)|\leq c_0$ implies $f_0(x)=f_0(y)$.

\medskip

For $k\in \N$, consider the set
\[ A_{0,k}\coloneqq \left\{ x\in X : |f_0(x)-f_0(T^{ln_k}x)|> c_0 ~~\text{ for some } l\in [1,2K_1]\cap \N \right \}. \]
Since, by Lemma \ref{Lem:RigidPowers}, the transformations $T,T^2,\ldots, T^{2K_1}$ are rigid for $(n_k)_{k\in \N}$, the measure of $A_{0,k}$ goes to 0 as $k$ goes to infinity. By our choice of $c_0$, the condition $x\in A_{0,k}^c$ implies that $f_0(x)=f_0(T^{n_k}x)=\cdots=f_0(T^{2K_{1}n_k}x)$.

We pick $n_{k_1}$ such that the measure of $A_{0,k_1}$ is smaller than $\frac{r_1}{4K_1}$ and we put $A_0=A_{0,k_1}$ and $n_1'=n_{k_1}$.

\medskip

We can use Lemma \ref{KR tower} to build a large Kakutani-Rokhlin tower of heights $H_1$ and $H_1+1$ (and with a clopen base). We then refine this column according to the $\alpha_0$-names.  We can assume that $H_1$ has the form $ 2K_1n'_1N_1+n'_1$, where $1/N_1\leq r_1/6$. We can subdivide every pure column into
$N_1$ subcolumns of length $2K_1n'_1$, starting from the bottom to the top. We call these {\em principal} subcolumns. The remaining $n_1'$ levels are called the {\em top}. For convenience, for those columns whose height is $H_1+1$ we add the top level to the top (so the top has $n'_1$ or $n'_1+1$ levels). Similarly, the first $n'_1$ levels are the {\em bottom} of the column (see Figure \ref{Fig:Towers}).

\begin{figure}[H]
	$$
	\xymatrix @W=6pc @H=1pc @R=0pc @*[F-] {
		{\hbox{$n_1'$ levels} }
		\save+<-6pc,1pc>*\hbox{\it top}
		\ar[]
		\restore
		\save+<3.5pc,-1.8pc>*{}
		\ar @{-} `r[dd] `[dddd]^{2K_1n_1' \text{levels}}
		\restore
		\save+<0 pc,3pc>*\hbox{\it $H_1$ column}
		\restore  \\
		\hbox{ $n_1'$ levels }
		\save*{}
		\restore \\
		{\vdots}
		\save*{}
		\restore \\
		\hbox{ $n_1'$ levels } \\
		\text{$n_1'$ levels}\\
		{\vdots}\\
		{\vdots}
		\save+<3.5pc,-1.8pc>*{}
		\ar @{-} `r[dd] `[dddd]^{2K_1n_1' \text{levels}}
		\restore \\
		\hbox{ $n_1'$ levels }
		\save*{}
		\restore \\
		{\vdots}
		\save*{}
		\restore \\
		{\hbox{ $n_1'$ levels }}
		\\ \hbox{ $n_1'$ levels }
		\save+<-6pc,-1pc>*\hbox{\it bottom}
		\ar[]
		\restore 
	}
	\xymatrix @W=6pc @H=1pc @R=0pc @*[F-] {
		\hbox{ $n_1'+1$ levels } \save+<-6pc,1pc>*\hbox{\it top}
		\ar[]
		\restore
		\save+<3.5pc,-1.8pc>*{}
		\ar @{-} `r[dd] `[dddd]^{2K_1n_1' \text{levels}}
		\restore
		\save+<0 pc,3pc>*\hbox{\it $H_1+1$ column}
		\restore\\
		\hbox{ $n_1'$ levels }
		\save*{}
		\restore \\
		{\vdots}
		\save*{}
		\restore \\
		\hbox{ $n_1'$ levels } \\
		\text{$n_1'$ levels}\\
		{\vdots}\\
		{\vdots}
		\save+<3.5pc,-1.8pc>*{}
		\ar @{-} `r[dd] `[dddd]^{2K_1n_1' \text{levels}}
		\restore\\
		\hbox{ $n_1'$ levels }
		\save*{}
		\restore \\
		{\vdots}
		\save*{}
		\restore \\
		{\hbox{ $n_1'$ levels }}
		\\ \hbox{ $n_1'$ levels }
		\save+<-6pc,-1pc>*\hbox{\it bottom}
		\ar[]
		\restore
	}
	$$    
	\caption{Principal subcolumns and top of a tower}
	\label{Fig:Towers}
\end{figure}
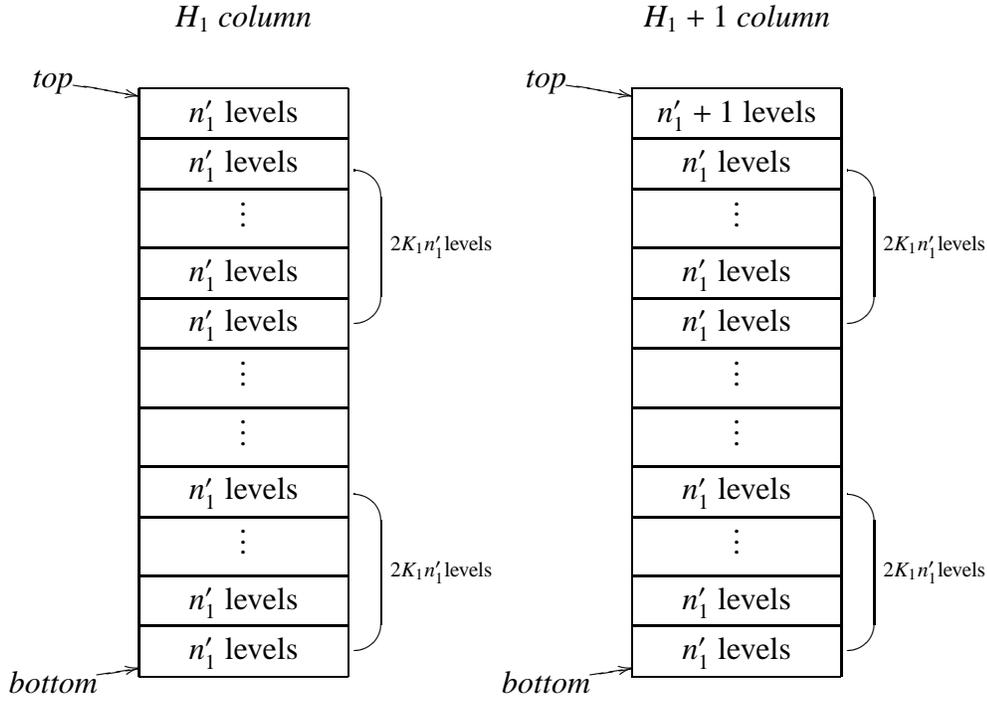

Our aim is to modify $f_0$ to $f_1$ such that $|f_1(x)-f_1(T^{n_1'}x)|< \frac{1}{K_1}$ for every $x\in X$. Translated to columns, this means that the difference of levels at distance $n_1'$ is smaller than $\frac{1}{K_1}$. Since $c_0$ is small enough, we have that in many cases two such levels are equal, but there is a small portion where this does not happen. We fix this problem allowing to the levels to take more values between $0$ and $1$. Now we explain how to do this. Let us consider two consecutive principal subcolumns and consider the first $n_1'$ levels of each one of them. We remark that if one level is in $A_0^c$ (meaning that the corresponding set of this level is a subset of $A_0^c$) , then it is constant in the $ln'_1$-levels above it for $l=1,\ldots,2K_1$. Indeed, this property characterizes belonging to $A_0$: a level who is in $A_0$ will change its value in some of the levels $ln'_1$, $l=1,\ldots,2K_1$ above it. We correct this values as follows:

\medskip

\noindent {\bf Step $1$-I: Modification of the top and the bottom. }We change the values of the top and the bottom of any pure column putting 0's, {\it i.e.} we paint (please recall Section \ref{Sec:PaintingNames}) the bottom and top with the 0 symbol on each level. This step is to ensure that the transition from one pure column to another one is $1/K_1$ good at $n'_1$. We may lose the property that $f_0$ is a generating function, but we fix this later in the end of the next step.

\medskip

\noindent {\bf Step $1$-II: Modification inside a pure column.} Consider two consecutive principal subcolumns and look at the first $n_1'$ levels of the first one and the first $n_1'$ levels of the second one. Perform a transition in $2K_1$-steps between these two subcolumns. Lemma \ref{Lem:goodTransition} ensures that all levels of the first principal subcolumn become $1/K_1$-good at $n_1'$.

This of course may change the $2K_1-1$ remaining levels of the first principal subcolumn but in fact we see that not many of them are modified: among the first $n_1'$ levels, those who belong to $A_0^c$ remain unchanged in their $n_1'$ translations. Recall that this follows from the fact that if  $x\in A_0^c$ then
$|f_0(x)-f_0(T^{ln_1'}x)|\leq c_0$ for all $l=1,\ldots,2K_1$ which implies that $f_0(x)=f_0(T^{n_1'}x)=\cdots=f_0(T^{2K_1n_1'}x)$.

In the other hand, we remark that for any level in $A_0$ we change at most $2K_1-1$ levels, so the quantity of levels we have changed in the first principal subcolumn is at most
\[ (2K_1-1)\#(\text{Levels in } A_0 \text{ in the first } n'_1 \text{ levels}).\]

\medskip

We repeat doing this process for all principal subcolumns, remarking that in the last one we perform the transition using the top (which has zeros). Therefore, any level is $1/K_1$-good for $n_1'$. It remains to show that we have modified $f_0$ in a small set.

For the first and last principal subcolumn and the top $n_1'$ levels we may change all levels, which are not more than  $4K_1n_1'+n_1'$. For any other principal subcolumn we do not change more than $$(2K_1-1)\#(\text{Levels in }A_0 \text{ in the first } n_1' \text{ levels}).$$ Therefore, in any pure column we change at most
\[ (4K_1+1)n_1'+(2K_1-1)\#(\text{Levels in } A_0) \]
(here the quantity of levels in $A_0$ is an upper bound for the quantity of levels we may find in the first $n_1'$ levels of the principal subcolumns).

Therefore, we modified any pure column in a proportion at most
\[ \frac{(4K_1+1)n_1'+1 +(2K_1-1)\#(\text{Levels in }A_0)}{N_12K_1n_1'+n_1'} \]
and therefore we have changed $f_0$ in a set of measure smaller than
\[\frac{3}{N_1}+ (2K_1-1)\mu(A_0) \]
and this set has measure smaller than $r_1$ by our assumptions. Since all levels are clopen sets, we have built a continuous function $f_1$ (with finitely many values) whose associated partition $\alpha_1$ is close to $\alpha_0$ in the partition metric. The function $f_1$ is $1/K_1$-good at $n'_1$ and $1/K_0+1/K_1$-good at $n_0'$ (this last condition holds trivially in this case).

We then make sure that all pure columns are different, modifying the first level of each one by an amount much smaller than all the constants involved, {\it i.e.} we paint (recall Section \ref{Sec:PaintingNames}) the first level of each pure column with a different, but very small value. Recall that the definition of being good involve a strict inequality, so we have enough freedom to achieve this without break the being good property.

By doing all pure columns different, we get that the sets defined by $\alpha_0$ names of length $H_1$ are the union of different $\alpha_1$ names of length $H_1$, which implies that $\alpha_1$ is also a generating partition.

We remark that we have to perform the modification in the order we gave. We need to perform the transitions of blocks after the modifications of the top and bottom, in order to correct the lack of rigidity we may have introduced.

\bigskip

\noindent {\bf Step $i+1$:}
The general case, {\em i.e.} how to obtain $f_{i+1}$ from $f_i$, is similar, but we have to be careful that when trying to fix being $1/K_{i+1}$-good at $n_{i+1}'$ we do not spoil the previous good conditions (at this step being topologically mixing will help us).

\medskip

Suppose we are given $f_i$ and $n_1',\ldots,n_i'$ such that $f_i$ is $\left( \sum_{l=j}^{i}\frac{1}{K_l} \right)$-good at $n_j'$ for $j\leq i$. We now show how to find $n_{i+1}'$ and build $f_{i+1}$ satisfying the corresponding properties.

Since $f_i$ takes finitely many values we have that there exists $c_i>0$ such that $\vert f_i(x)-f_i(y)\vert \leq c_i$ implies that $f_i(x)=f_i(y)$.

Since $(X,T)$ is topologically mixing, there exists $L_{i}\geq n_i'$ such that any couple itineraries of length $n_i'$ can be joined by an itinerary of any length greater or equal than $L_i$.

\medskip

Consider the set
\[A_{i,k}=\left \{x: |f_i(x)-f_i(T^{ln_k}x)|> c_i \text{ for some } l=1,\ldots,2K_{i+1}  \right \}\]
Since $T,T^2,\ldots,T^{2K_{i+1}}$ are rigid , we have that for big enough $k_{i+1}$ the measure of $A_{i,k_{i+1}}$ is smaller than $\frac{r_{i+1}}{6K_{i+1}}$ and of course we can also require that $\frac{2L_i}{n_{k_{i+1}}}\leq \frac{r_i}{3}$.

Put $A_i=A_{i,k_{i+1}}$ and $n_{i+1}'=n_{k_{i+1}}$ as above. We remark that $x\in A_i^c$ implies that the values $f_i(x),f_i(T^{n'_{i+1}}x),\ldots,f_i(T^{2K_{i+1}n'_{i+1}}x)$ are all equal.

\medskip

We then use Lemma \ref{KR tower} to construct a tower with heights $H_{i+1}$ and $H_{i+1}+1$ and we can assume that $H_{i+1}=N_{i+1}2K_{i+1}n'_{i+1}+n'_{i+1}$, where $1/N_{i+1}\leq r_{i+1}/9$. Similarly as was done in the first step, we subdivide every pure column into
$N_{i+1}$ subcolumns of length $2K_{i+1}n'_{i+1}$, starting from the bottom to the top and we call them {\em principal} subcolumns. The remaining $n_{i+1}'$ levels are called the {\em top}. Again, for those columns whose height is $H_{i+1}+1$ we add the top level to the top. The first $n_{i+1}'$ levels are the {\em bottom} of the column.

Refine the columns accordingly to the names given by the partition $\alpha_i$ (the partition associated to the function $f_i$). Pick a pure column and modify it accordingly to the following steps:

%\bigskip

\medskip

\noindent {\bf Step $(i+1)$-I: Modification of the bottom and the top.}

When we are close to the top a column, we do not know where the point will lie after $n_{i+1}'$ levels, so we will modify the bottoms and the tops of the columns so that this transitions satisfy the good conditions. To achieve this, we first modify the top and bottom of any pure column by putting 0's, {\it i.e.} we paint those levels with the symbol 0.

\medskip

\noindent{\bf Step $(i+1)$-II: Guarantee not spoil anything.}

We may continue similarly as in Step 1-II, {\it i.e.}  performing transitions between blocks. Unfortunately this does not suffices since by doing this we may break the conditions of being good for the previous steps. More precisely, the function $f_i$ satisfies $|f_i(x)-f(T^{n_i'}x)|\leq \left( \sum_{l=j}^{i}\frac{1}{K_l} \right)$-good at $n'_j$ for any $j\leq i$, but if we perform transitions we may lose this property, especially in the levels close to the bottom and top of the blocks we concatenate. In order to keep this property when performing transitions we need to ensure conditions so that Lemma \ref{Lem:Transition} can be applied. To guarantee such conditions we make use of mixing and we proceed as follows.

Pick a pure column and consider a principal subcolumn (different from the one at the bottom, whose $n_{i+1}'$ first levels are modified in {Step $(i+1)$-I}). Let $B=a_1\ldots a_{n_{i+1}'}$ be the block in $[0,1]^{n'_{i+1}}$ corresponding to the values of its firsts $n_{i+1}'$ levels. Let $B_1=a_1\ldots a_{n'_{i}}$ and $B_2=a_{n'_{i+1}-n'_{i}-L_i+1}\ldots a_{n'_{i+1}-L_{i}} \in [0,1]^{n'_i}$. Since we assume that $(X,T)$ is topologically mixing, we can find $B_3\in [0,1]^{L_i}$ such that $B_2B_3B_1$ is a valid itinerary of $f_i$. We then replace the top $L_i$ levels of $B$ by $B_3$ and we get the block $B'$. Since $B_2B_3B_1$ is a valid itinerary for $f_i$ we have that:
\[ \vert B'_{n'_{i+1}-n'_{j}+k}-B'_{k} \vert \leq \left(\sum_{l=j}^{i}\frac{1}{K_l} \right) \text{ for any } j\leq i  \text{ and any } k\leq n_j' \]

%We do this process to ensure that transitions we will make in the next step do not spoil previous good properties.
$$
\xymatrix @W=3pc @H=2pc @R=0pc @*[F-] {
	B_3 
	\save+<5pc,0pc>*\hbox{\it $L_i$ levels}
	\ar[]
	\restore
	\\
	B_2
	\save+<-5pc,1pc>*\hbox{B}
	\ar[dd]
	\restore  \\
	{\vdots}
	\\
	\vdots
	\\ 
	B_1
	\save+<5pc,0pc>*\hbox{\it $n_i'$ levels}
	\ar[]
	\restore
}
$$

\medskip

\noindent {\bf Step $(i+1)$-III: Modification inside a pure column.}
We are now ready to perform transitions.

Consider two consecutive principal subcolumns modified accordingly to { Step $(i+1)$-I and Step $(i+1)$-II} and perform a transition between the first $n_{i+1}'$ levels of these subcolumns. We recall that a level among the firsts $n_{i+1}'-L_{i}$ levels of a principal subcolumn (so it is not modified in { Steps $(i+1)$-I and $(i+1)$-II}) is in $A_i^c$ if and only if is constant in the $ln_{i+1}'$-levels above it for $l=1,\ldots,2K_{i+1}$. This means that the transition will not change the values of these levels. Lemma \ref{Lem:goodTransition} guarantees the precision $1/K_{i+1}$ we are looking for. The modifications we made in { Step $(i+1)$-II} and Lemma \ref{Lem:Transition} also ensures that the properties for $j\leq i+1$ are also respected (here we add some error term, given by $1/K_{i+1}$ but this value is small since we assume that the series is convergent.)

Again we modify the first level of each pure column in a small quantity such that all pure columns are different. The small quantity is chosen in order to keep the good properties of $f_i$ (which is defined by a strict inequality).

It remains to show that we have changed $f_i$ in a set of small measure. For any principal subcolumn (different from the ones at the bottom and top), we change at most
\[(2K_{i+1}-1)(L_i+\# (\text{ levels in } A_i \text{ among the firsts } n'_{i+1} \text{ levels }))   \]
levels. We may change all levels from the first and last principal subcolumns and the top ($n_{i+1}$ or $n_{i+1}+1)$ levels. Therefore, in a pure column the number of levels we change is at most

\[ 4K_{i+1}(n'_{i+1}+1) +1  +(2K_{i+1}-1)(N_{i+1}L_i+\# (\text{ levels in } A_i )) \]
and thus we have modified any pure column in a proportion smaller then
\[ \frac{4K_{i+1}n'_{i+1} + 1+ (2K_{i+1}-1)(N_{i+1}L_i+\# (\text{ levels in } A_i ))}{2N_{i+1}K_{i+1}n'_{i+1}+n'_{i+1}}. \]

From here we deduce that the set we modified has measure at most

\[ \frac{3}{N_{i+1}}+ \frac{2L_i}{n'_{i+1}} + (2K_{i+1}-1)\mu(A_i) \]
and this value is smaller then $r_{i+1}$ by our assumptions.
So, we have built $f_{i+1}$ which is continuous, generates for $T$ and $\mu(\{f_{i+1}\neq f_i\})<r_{i+1}$.

\medskip

We now consider the function $f$, the pointwise limit of the sequence $(f_i)_{i\in \N}$.

\medskip

\noindent {\bf Claim:}\quad $\| f- f\circ T^{n_i'}\|_{\infty} \to 0$ as $i$ goes to infinity.

\medskip

Let $X'$ be a set of full measure where $f_i$ converges to $f$. Let $\epsilon>0$ and let $j\in \N$ such that $\sum_{i\geq j}\frac{1}{K_i}\leq \epsilon/3$. Let $x\in X'$ and $i\geq j$. We can find $\bar{i}\geq i$ such that $|f_{\bar{i}}(x)-f(x)|\leq \epsilon/3 $ and $|f_{\bar{i}}(T^{n_i'}x)-f(T^{n_i}x)|\leq \epsilon/3$. Then, using that $f_{\bar{i}}$ is $\sum_{j=i}^{\bar{i}}\frac{1}{K_j}$-good for $n_i'$ we get that
\begin{align*}
|f(x)-f(T^{n_i'}x)| & \leq |f(x)-f_{\bar{i}}(x)|+|f_{\bar{i}}(x)-f_{\bar{i}}(T^{n_i'}x)|+|f_{\bar{i}}(T^{n_i'}x)-f(T^{n_i'}x)| \\
& \leq \epsilon/3+ \sum_{j=i}^{\bar{i}}\frac{1}{K_j}+\epsilon/3\leq \epsilon.
\end{align*}
Since $x$ and $i\geq j$ are arbitrary we get the conclusion.

\medskip

Now it remains to prove:

\medskip

\noindent {\bf Claim:}\quad The corresponding model $(X_f,\sigma)=(\text{supp} f^{\infty}\mu,\sigma)$ is uniformly rigid for $(n_i')_{i\in \N}$.

\medskip

Let $\epsilon>0$ and let $M\in \N$ such that if two sequences $\omega,\omega' \in [0,1]^{\Z}$ satisfy $|\omega_l-\omega_l'|\leq \epsilon/8$ for any $|l|\leq M$ then $d(\omega,\omega')\leq \epsilon/4$, { where $d$ is a metric on $X_f$.} Let $j$ such that $\|f-f\circ T^{n_{i}'}\|_\infty\leq \epsilon/2$ for any $i\geq j$.
Let $\omega$ be an arbitrary point in $Y$ and $i\geq j$. We can pick $x$ such that $\omega'=f^{\infty}(x)$ satisfy $|\omega_l-\omega_l'|\leq \epsilon/4$ for any $|l|\leq M+n_i'$ and such that  $|\omega'_{n_i'+p}-\omega'_p|\leq \epsilon/2$ for any $p\in \Z$. We deduce that
\begin{align*}
d(\sigma^{n_i'}\omega,\omega)\leq & d(\sigma^{n_i'}\omega,\sigma^{n_i'}\omega')+d(\sigma^{n_i'}\omega',\omega')+d(\omega,\omega') \\
\leq & \frac{\epsilon}{4} + \frac{\epsilon}{2} + \frac{\epsilon}{4}=\epsilon .
\end{align*}
Since this holds for any $i\geq j$ and $\omega\in Y$, we have that $(Y,\sigma)$ is uniformly rigid with rigidity sequence $(n_i')_{i\in \N}$.

\subsection{Proof of Theorem \ref{THM1}: Adding the weakly mixing condition}

We modify our construction in previous section such that the resulting system is (topologically) weakly mixing. Notice that though we assume that $(X,T)$ is mixing, this can not guarantee that $(X_f,\sigma)$ is weakly mixing.
%Recall that a system $(X,T)$ is weakly mixing if for all non-empty open sets $A,B,C,D$, there exists $n\in \Z$ such that $A\cap T^{-n}B\neq \emptyset$ and $C\cap T^{-n}D\neq \emptyset$.
%A similar strategy was carried out in \cite{LSY}.

{
	To make the representation $(X_f,\sigma)$ weakly mixing, one need to add the following condition: for all non-empty open sets $A,B,C,D$ there exists $n$ such that $\sigma^n A\cap B\neq \emptyset $ and $\sigma^nC\cap D\neq\emptyset$. This can be guaranteed by the following property of $\a$ (recall that $\a$ is the partition corresponding to $f$):
	For each $m\geq 0$ and $E_1,F_1,E_2,F_2\in \bigvee_{j=0}^{m-1}T^{-j} \a$, there is some $s$ such that
	$$\mu\times\mu((T\times T)^{s}(E_1\times F_1)\cap(E_2\times F_2)>0.$$
	To this aim, we need add similar property in each $\a_i$.
	The strategy in this section consist in incorporating this property gradually by modifying the bottom of a single pure column at each step (the ones described in the previous section) in such a way that we keep the rigidity property.}

\medskip

Now we give the details. Let $\{\a_i\}_{i=0}^\infty$ be the partitions in the previous section.

\begin{lem}\label{lem4.1}
	One can add the following properties in the partitions $\{\a_i\}_{i=0}^\infty$: there are a sequence of positive integers $\{s_i\}_{i=0}^{\infty}$, two sequences of positive
	numbers $\{r_i\}_{i=0}^{\infty},\{e_i\}_{i=0}^{\infty}$, with $r_{i+1}<\min\Big\{\cfrac{r_i}{2},\cfrac{e_i^2}{4i}\Big\}$, such that for all $i\ge 1$
	\begin{enumerate}
		\item $d(\a_{i},\a_{i+1})=\mu(\{f_i\neq f_{i+1}\})<r_{i+1}$.
		\item Let $\bigvee_{j=0}^{i-1}T^{-j}\a_{i}=\{U_1^i,\ldots,U^i_{\eta_i}\}$ with $U_j^i$ being nontrivial. Then there is a subset $\{U_1^{i+1},\ldots,U^{i+1}_{\eta_i}\}
		\subset\bigvee_{j=0}^{i-1}T^{-j} \a_{i+1}$ such that the $\a_{i}$-name of $U^i_h$ and the $\a_{i+1}$-name of $U^{i+1}_h$ are the same, $\forall 1\le h\le \eta_i$.
		\item for all $E_1,F_1,E_2,F_2 \in \{U_1^{i+1},\ldots,U^{i+1}_{\eta_i}\}$ as in (2), one has
		that $$\mu\times\mu((T\times T)^{s_{i+1}}(E_1\times F_1)\cap(E_2\times F_2))\geq e_{i+1}^2>0.$$
	\end{enumerate}
	
\end{lem}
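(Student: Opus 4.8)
The plan is to run the inductive construction of the previous subsection essentially verbatim, inserting into it the choices of the auxiliary constants $s_i,r_i,e_i$. Conditions (1) and (2) will come out as nearly free bookkeeping, while condition (3) is the one genuinely new ingredient and the combinatorial engine that will eventually force $(X_f,\sigma)$ to be topologically weakly mixing. The mechanism behind (3) is that our model was chosen topologically (strongly) mixing, so $(X\times X,T\times T)$ is again topologically strongly mixing (for a product of open sets one takes the larger of the two mixing thresholds of the factors), and the unique invariant measure $\mu$ has full support by minimality, whence $\mu\times\mu$ has full support on $X\times X$. I emphasize that one cannot instead appeal to ergodicity of $\mu\times\mu$ under $T\times T$: the underlying system is only assumed rigid and ergodic, hence could be a rotation, so its product measure is typically very far from ergodic. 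It is precisely the \emph{topological} strong mixing of the chosen model, together with full support, that is exploited.

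Now for the inductive step. Fix $i\ge 1$ and suppose $\alpha_i$ and $e_i,r_i,s_i$ have been constructed. Let $\{U^i_1,\dots,U^i_{\eta_i}\}$ be the atoms of $\bigvee_{j=0}^{i-1}T^{-j}\alpha_i$; these are finitely many \emph{clopen} sets of positive measure. Since $(X\times X,T\times T)$ is topologically strongly mixing, there is a threshold beyond which, for every one of the finitely many ordered quadruples $(a,b,c,d)$, the set $(T\times T)^{n}(U^i_a\times U^i_b)\cap(U^i_c\times U^i_d)$ is nonempty; fix one such time $s_{i+1}$. Each of these intersections is then a nonempty \emph{open} subset of $X\times X$, hence of positive $\mu\times\mu$-measure by full support, and since there are finitely many quadruples I can fix $e_{i+1}>0$ with
\[
\mu\times\mu\big((T\times T)^{s_{i+1}}(U^i_a\times U^i_b)\cap(U^i_c\times U^i_d)\big)\ \ge\ 4e_{i+1}^2
\]
for all $a,b,c,d$. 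Finally I choose $r_{i+1}$ smaller than every relevant threshold: in particular $r_{i+1}<\min\{r_i/2,\,e_i^2/(4i)\}$, and also $r_{i+1}<e_{i+1}^2/(2i)$ and $r_{i+1}<\min_h\mu(U^i_h)/(2i)$, and then build $\alpha_{i+1}$ exactly as in the rigidity construction so that $d(\alpha_i,\alpha_{i+1})=\mu(\{f_i\neq f_{i+1}\})<r_{i+1}$. This gives (1).

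For (2), let $w$ be the $\alpha_i$-name of $U^i_h$, a word of length $i$. On the set $U^i_h\cap\bigcap_{j=0}^{i-1}T^{-j}\{f_i=f_{i+1}\}$ the functions $f_i$ and $f_{i+1}$ agree along the first $i$ coordinates, so the $\alpha_{i+1}$-name there is again $w$; since this set has measure at least $\mu(U^i_h)-i\,r_{i+1}>0$, the $\alpha_{i+1}$-join atom $U^{i+1}_h$ named by $w$ is nontrivial and carries the same name as $U^i_h$, which is (2). The same count shows $\mu(U^i_h\triangle U^{i+1}_h)\le i\,r_{i+1}$, because a point lies in the symmetric difference only if $f_i\neq f_{i+1}$ at one of its first $i$ coordinates. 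For (3), each product set then changes by at most $2i\,r_{i+1}$ in $\mu\times\mu$, and $(T\times T)^{s_{i+1}}$ preserves $\mu\times\mu$, so the four-set intersection for the $U^{i+1}$ differs from that for the $U^i$ by at most $4i\,r_{i+1}\le 2e_{i+1}^2$; together with the lower bound $4e_{i+1}^2$ above this yields $\ge e_{i+1}^2$, which is (3).

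The only real subtlety is conceptual rather than computational: condition (3) must be obtained with a \emph{single} time $s_{i+1}$ valid for all quadruples simultaneously, and with positive measure rather than merely nonempty intersections. This is exactly what forces the use of strong (not just weak) topological mixing of the model together with full support of $\mu\times\mu$, and is why the factor system $(X\times X,T\times T)$ is the right object. The explicit decay $r_{i+1}<\min\{r_i/2,\,e_i^2/(4i)\}$ is tuned not for the present step but for the eventual passage to the limit: $r_{i+1}<r_i/2$ makes the tail $\sum_{k>i}r_k\le 2r_{i+1}$ small, and $r_{i+1}<e_i^2/(4i)$ then guarantees that the total later displacement $i\sum_{k\ge i}\mu(\{f_k\neq f_{k+1}\})$ of the sets $U^i_h$ stays below $e_i^2$, so that the positivity recorded in (3) survives in the pointwise limit $f=\lim f_i$ and can be read, through $f^\infty$, as the product-transitivity criterion for $(X_f,\sigma)$. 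That last limit argument is carried out after the lemma; here it suffices to verify, as above, that all the constants can be chosen consistently at every stage.
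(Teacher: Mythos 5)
Your proof is correct, but it reaches condition (3) by a genuinely different mechanism than the paper. The paper never invokes mixing of the product system: it works inside the symbolic construction itself, building a single word $\omega_n$ that contains every pair of names $B_{j_1},B_{j_2}$ of the nontrivial atoms of $\bigvee_{j=0}^{n-1}T^{-j}\alpha_n$ at exact distance $s_{n+1}$ (topological mixing of $(X,T)$ is used only to join these names by legal itineraries of length $L_n$, with $s_{n+1}>L_n+n$), and then \emph{paints} $\omega_n$ onto the bottom of one pure column $\mathfrak{c}_{n+1}$ of the step-$(n+1)$ K-R tower, setting $e_{n+1}=\mu(C_{n+1})$ to be the measure of that column's base; positivity in (3) then comes from the explicit inclusion $T^{p-1}C_{n+1}\times T^{p-1+r}C_{n+1}\subset (D_{i_1}\times D_{i_2})\cap (T\times T)^{-s_{n+1}}(D_{j_1}\times D_{j_2})$, so the witness set is exhibited by hand inside the new partition. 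You instead get positivity for free from soft topological facts: the atoms $U^i_h$ are clopen, the product of the strongly mixing model with itself is strongly mixing (so one time $s_{i+1}$ works for all finitely many quadruples — and you are right that ergodicity of $\mu\times\mu$ is unavailable here), minimality gives $\mu\times\mu$ full support, and then you transfer the bound from the $U^i$-atoms to the $U^{i+1}$-atoms via the correct perturbation estimate $\mu(U^i_h\triangle U^{i+1}_h)\le i\,r_{i+1}$, choosing $e_{i+1}$ and $s_{i+1}$ \emph{before} $r_{i+1}$ and $\alpha_{i+1}$. What your route buys is significant: the rigidity construction of $f_{i+1}$ is left completely untouched, whereas the paper must argue (and does so only informally — ``make the adjustment as in the previous section and make sure everything in previous section remain the same'') that copying $\omega_n$ onto the column, followed by suitable transitions, does not spoil the uniform-rigidity estimates; your version eliminates that delicate compatibility check at the price of invoking full support and product mixing of the model, both of which are indeed available from the Lehrer-type model fixed at the outset. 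One small divergence worth noting: in the paper $e_{n+1}$ is produced during the construction of $\alpha_{n+1}$ (a column-base measure), while in your scheme it depends only on $\alpha_i$; both orderings are internally consistent with the constraint $r_{i+1}<\min\{r_i/2,\,e_i^2/(4i)\}$, and your extra requirements $r_{i+1}<e_{i+1}^2/(2i)$ and $r_{i+1}<\min_h\mu(U^i_h)/(2i)$ correctly secure (3) and the nontriviality needed in (2).
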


\begin{proof}
	Assume that inductively we have constructed partitions $\{\a_i\}_{i=0}^n$, a sequences of positive integers $\{s_i\}_{i=0}^{n}$, two sequences of positive
	numbers $\{r_i\}_{i=0}^{n},\{e_i\}_{i=0}^{n}$, with $r_{i+1}<\min\Big\{\cfrac{r_i}{2},\cfrac{e_i^2}{4i}\Big\}$ for each $i\le n-1$.
	Let $\a_i=\{A_1^i,A_2^i,\ldots,A_{m_i}^i\}$ for $1\le i \le n $. Let $f_i\colon X\rightarrow \{a_1,a_2,\ldots, a_{m_i}\}\subseteq [0,1]$ such that $A^i_j=f^{-1}_i(a_j)$.
	
	\medskip
	
	The sequence $\{\a_i\}_{i=0}^n$ satisfies the following properties:
	for each $0\le i\le n-1$
	
	\noindent $(1)_i$: {\em
		We have $d(\a_{i},\a_{i+1})=\mu(\{f_i\neq f_{i+1}\})<r_{i+1}$.
		Let $\bigvee_{j=0}^{i-1}T^{-j}\a_{i}=\{U_1^i,\ldots,U^i_{\eta_i}\}$ with $U_j^i$ being nontrivial. Then there is a subset $\{U_1^{i+1},\ldots,U^{i+1}_{\eta_i}\}
		\subset\bigvee_{j=0}^{i-1}T^{-j} \a_{i+1}$ such that the $\a_{i}$-name of $U^i_h$ and the $\a_{i+1}$-name of $U^{i+1}_h$ are the same, $\forall 1\le h\le \eta_i$. Moreover,
		for all $E_1,F_1,E_2,F_2 \in \{U_1^{i+1},\ldots,U^{i+1}_{\eta_i}\}$, one has
		that $$\mu\times\mu((T\times T)^{s_{i+1}}(E_1\times F_1)\cap(E_2\times F_2))\geq e_{i+1}^2>0.$$
		
	}
	
	%\medskip
	
	Now we make the induction for the $(1)_n$ case. First we need to define a word $\omega_{n}$
	which contains all pairs of names of non-trivial elements in $\bigvee_{i=0}^{n-1}T^{-i}\a_n$. We do it as follows.
	
	Let $\bigvee_{j=0}^{n-1}T^{-j}\a_{n}=\{U_1^n,\ldots,U^n_{\eta_n}\}$ with $U_i^n$ being nontrivial. Let $B_t$ be the name of $U^n_t$ for each $1\le t\le \eta_n$. Then  $W_{n}=\{B_1,B_2,\ldots,B_{\eta_n}\}\subset \{a_1,a_2,\ldots, a_{m_n}\}^{n}$ is the
	set of all names of nontrivial elements of $\bigvee_{i=0}^{n-1}T^{-i}\a_n$.
	Since $(X,T)$ is topologically mixing, there exists $L_{n}$ such that any couple of $W_n$ can be joined by an itinerary of length greater than $L_n$.
	
	Now fix a large number $s_{n+1}>L_n+n$, and construct the word $\w_{n}$ as follows:
	For each pair $(j_1,j_2)\in \{1,\ldots,\eta_n\}^2$, make sure that words $B_{j_1}$
	and $B_{j_2}$ appear in $\w_{n}$, and the distance from the word $B_{j_1}$ to the
	word $B_{j_2}$ is $s_{n+1}$.
	
	Let $\mathfrak{t}$ be the tower in the step $n+1$ in the previous section. Refine $\mathfrak{t}$ according to $\a_n$, and choose one column
	$\mathfrak{c}_{n+1}$ of the resulting tower. Let the base of $\mathfrak{c}_{n+1}$ be $C_{n+1}$. Let $e_{n+1}=\mu(C_{n+1})$. Now we do the following adjustment for the column $\mathfrak{c}_{n+1}$. Copy the name $\w_n$ on some place close to the bottom of the  column $\mathfrak{c}_{n+1}$, for instance we can copy $\w_n$ in the bottom of the second principal subcolumn. We consider towers of level $n+1$ such that the bottom is large enough with respect to the length of $\w_n$ so we can apply the steps I, II and III described in the previous section. This process keeps the good properties related to the uniform rigidity.
	
	As in the previous section, we get a new function $f_{n+1}$ and a corresponding partition $\a_{n+1}$, and we can make sure that
	$$d(\a_{n},\a_{n+1})=\mu(\{f_n\neq f_{n+1}\})<r_{n+1}<\min\Big\{\cfrac{r_n}{2},\cfrac{e_n^2}{4n}\Big\}. $$
	
	By the construction of $\a_{n+1}$, there is a subset $\{U_1^{n+1},\ldots,U^{n+1}_{\eta_n}\}
	\subset$ $\bigvee_{j=0}^{n-1}T^{-j} \a_{n+1}$ such that the $\a_{n}$-name of $U^n_h$ and the $\a_{n+1}$-name of $U^{n+1}_h$ are the same, $\forall 1\le h\le \eta_n$.
	
	%And the second part of $(1)_{n+1}$ is guaranteed by the construction of $\w_{n+1}$.
	
	Let $D_{i_1},D_{i_2},D_{j_1},D_{j_2}\in \{U_1^{n+1},\ldots,U^{n+1}_{\eta_n}\}$, and let their names
	be $B_{i_1}, B_{i_2}$, $B_{j_1}, B_{j_2}\in W_{n}$ respectively, where $1\le i_1,i_2,j_1,j_2\leq \eta_n$. Then by the definition of $\w_{n}$, pairs $(B_{i_1}, B_{j_1})$ and $(B_{i_2}, B_{j_2})$ appear in the word $\w_{n}$. Let $p$ be the position of $B_{i_1}$ in the column $\mathfrak{c}_{n+1}$ and let $r$ be the distance from the position of $B_{i_1}$ to the position of $B_{i_2}$. Then we have:
	$$T^{p-1}C_{n+1}\subset D_{i_1}, T^{p-1+s_{n+1}}C_{n+1}\subset D_{j_1},T^{p-1+r}
	C_{n+1}\subset D_{i_2},T^{p-1+r+s_{n+1}}C_{n+1}\subset D_{j_2}.$$
	It follows that
	\begin{equation*}
	\begin{split}
	& \quad  T^{p-1}C_{n+1}\times T^{p-1+r}C_{n+1}\\ &\subset (D_{i_1}\cap T^{-s_{n+1}}D_{j_1})\times
	(D_{i_2}\cap T^{-s_{n+1}}D_{j_2})\\ & =(D_{i_1}\times  D_{i_2})\cap(T\times T)^{-s_{n+1}}( D_{j_1}\times  D_{j_2})
	\end{split}
	\end{equation*}
	Hence
	\begin{equation*}
	\begin{split}
	& \quad \ \mu\times\mu((D_{i_1}\times  D_{i_2})\cap(T\times T)^{-s_{n+1}}( D_{j_1}\times
	D_{j_2}))\\ & \geq \mu\times\mu(T^{p-1}C_{n+1}\times T^{p-1+r}C_{n+1})\ge e_{n+1}^2>0.
	\end{split}
	\end{equation*}
	Thus $(1)_{n}$ holds. The proof is completed.
\end{proof}

Recall that $\a$ is the partition corresponding to $f$.

\begin{prop}
	The representation $(X_f,\sigma)$ is also weakly mixing.
\end{prop}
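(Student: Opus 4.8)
The plan is to verify weak mixing through the four–open–sets characterization recorded in the preliminaries: it suffices to show that for any four nonempty open subsets of $X_f$ there is a single $n$ realizing both crossings. Since $X_f\subseteq[0,1]^{\Z}$ is a subshift, cylinders form a basis for the topology, and a cylinder meets $X_f$ exactly when it has positive $f^{\infty}\mu$–measure. So I would fix four words $C_1,C_2,C_3,C_4$ with $(f^{\infty}\mu)([C_r])>0$, set $k=\max_r|C_r|$, and aim to produce a single $n$ with $[C_1]\cap\sigma^{-n}[C_2]\neq\emptyset$ and $[C_3]\cap\sigma^{-n}[C_4]\neq\emptyset$ in $X_f$.

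The engine is property (3) of Lemma \ref{lem4.1}, read dynamically. If $a\in E_1$ with $T^{s_{i+1}}a\in E_2$, then the itinerary $f_{i+1}^{\infty}(a)$ carries the $\a_{i+1}$–name of $E_1$ at the origin and the name of $E_2$ at position $s_{i+1}$; thus $f_{i+1}^{\infty}(a)\in[\text{name}(E_1)]\cap\sigma^{-s_{i+1}}[\text{name}(E_2)]$, and similarly for the $F$–coordinate. So I would choose $i$ larger than $k$ and, using properties (1)–(2), select positive–measure elements $E_1,E_2,F_1,F_2$ of $\{U_1^{i+1},\ldots,U^{i+1}_{\eta_i}\}$ whose $\a_{i+1}$–names have length-$k$ prefixes $C_1,C_2,C_3,C_4$ respectively. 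Such elements exist because each $[C_r]$ has positive $f^{\infty}\mu$–measure and, for $i$ large, $f$ agrees with $f_i$ off a set of small measure, so $[C_r]$ carries a positive–measure cylinder of $\bigvee_{j=0}^{i-1}T^{-j}\a_i$ with prefix $C_r$; by (2) the names of the $U^{i+1}_h$ run exactly over the names of the nontrivial elements of $\bigvee_{j=0}^{i-1}T^{-j}\a_i$, so this cylinder lifts to the required $U^{i+1}_h$. Property (3) then yields $\mu\times\mu\big((T\times T)^{s_{i+1}}(E_1\times F_1)\cap(E_2\times F_2)\big)\ge e_{i+1}^2>0$, with the common shift $n=s_{i+1}$.

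The one real subtlety — and the step I expect to be the main obstacle — is that property (3) lives in the measurable system governed by the finite approximation $f_{i+1}$, whereas $X_f$ is governed by the limit $f=\lim f_i$; a crossing pair $(a,b)$ from (3) controls only the $\a_{i+1}$–names, not the $f$–names, on the relevant windows. To convert it into a genuine point of $X_f\times X_f$ I would discard the set on which $f$ and $f_{i+1}$ disagree on the windows of length $k$ at the origin and at $s_{i+1}$ in each coordinate. By $T$–invariance of $\mu$ this exceptional set has $\mu\times\mu$–measure at most a fixed multiple of $k\,\mu(\{f\ne f_{i+1}\})$; since $\mu(\{f\ne f_{i+1}\})\le\sum_{j\ge i+2}r_j<2r_{i+2}$ while the hypothesis $r_{i+1}<e_i^2/(4i)$ gives $2r_{i+2}<e_{i+1}^2/(2(i+1))$, this measure is $<e_{i+1}^2$ once $i$ is large relative to $k$. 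Hence it cannot absorb the positive–measure crossing set of (3), and a good crossing pair survives whose images under $f^{\infty}$ lie in $X_f\times X_f$ and witness $[C_1]\cap\sigma^{-s_{i+1}}[C_2]\neq\emptyset$ and $[C_3]\cap\sigma^{-s_{i+1}}[C_4]\neq\emptyset$. As the four cylinders were arbitrary, $(X_f,\sigma)$ is weakly mixing.
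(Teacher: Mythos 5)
Your proposal is correct and takes essentially the same route as the paper: reduce weak mixing to crossings of positive-measure name-cylinders, obtain the crossing at time $s_t$ from Lemma \ref{lem4.1}(3), and transfer it from the approximating partition to the limit one using $\mu(\{f\neq f_{t}\})\le\sum_{j>t}r_j$ against $e_t^2$ via $r_{t+1}<e_t^2/(4t)$ --- your pointwise discarding of disagreement windows is the same estimate the paper phrases as $d\bigl(\bigvee_{j=0}^{t-1}T^{-j}\a_t,\bigvee_{j=0}^{t-1}T^{-j}\a\bigr)\le t\,d(\a_t,\a)$. The one loose point (exact-word cylinders are not open in $[0,1]^{\Z}$, so one should first pass through small open windows around a point of each open set and then pick a positive-measure exact-name atom inside) is equally implicit in the paper's own reduction to elements of $\bigvee_{j=0}^{m-1}T^{-j}\a$, and is repaired the same way in both arguments.
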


\begin{proof}
	We show that for non-empty open sets $A,B,C,D$ there exists $n$ such that $\sigma^n A\cap B$ and $\sigma^nC\cap D$ are non-empty. This is guaranteed by the following property:
	For each $m\geq 0$ and $E_1,F_1,E_2,F_2\in \bigvee_{j=0}^{m-1}T^{-j} \a$, there is some $s$ such that
	$$\mu\times\mu((T\times T)^{s}(E_1\times F_1)\cap(E_2\times F_2)>0.$$
	
	We follow the notations in Lemma \ref{lem4.1}.
	By the definition of $\a$ and Lemma \ref{lem4.1}, there is some large enough $t>m$ such that there are $E'_1,F'_1,E'_2,F'_2\in \bigvee_{j=0}^{m-1}T^{-j} \a_{t}$ such that they have the same
	names with $E_1,F_1,E_2,F_2$ respectively. Choose $C'_1, D'_1, C'_2, D'_2\in \{U_1^{t},\ldots,U^{t}_{\eta_{t-1}}\}
	\subset\bigvee_{j=0}^{t-2}T^{-j} \a_{t}$  such that $C'_1\subset E'_1, D'_1\subset F'_1, C'_2\subset E'_2, D'_2\subset F'_2$. Then there are elements $C_1\subset E_1, D_1\subset F_1, C_2\subset E_2, D_2\subset F_2$ in $\bigvee_{j=0}^{t-2}T^{-j} \a$ such that they have the same names with $C'_1,D_1',C_2',D_2'$ respectively.
	
	By Lemma \ref{lem4.1}-(3),
	$$\mu\times\mu((T\times T)^{s_t}(C'_1\times D'_1)\cap(C'_2\times D'_2))\geq e_{t}^2.$$
	Then by $d(\bigvee_{j=0}^{t-1}T^{-j} \a_t, \bigvee_{j=0}^{t-1}T^{-j} \a)\leq t d(\a_t,\a)<t \sum_{j=t+1}^\infty r_j$, one has that
	\begin{equation*}
	\begin{split}
	& \quad  \mu\times\mu((T\times T)^{s_t}(C_1\times D_1)\cap(C_2\times D_2)) \\ &\geq \mu\times\mu((T\times T)^{s_t}(C'_1\times D'_1)\cap(C'_2\times D'_2))-t\sum_{j=t+1}^{\infty}r_j \\ & \geq e_{t}^2-t\sum_{j=t+1}^{\infty}r_j\geq
	e_t^2-t(r_{t+1}+\frac{r_{t+1}}{2}+\frac{r_{t+1}}{2^2}+\ldots )\\&\geq e_t^2-tr_{t+1}\sum_{j=0}^\infty\frac{1}{2^j}\geq e_t^2-2tr_{t+1}\geq e_t^2/2>0.
	\end{split}
	\end{equation*}
	In particular,
	\begin{equation*}
	\begin{split}
	& \quad  \mu\times\mu((T\times T)^{s_t}(E_1\times F_1)\cap(E_2\times F_2)) \\ &>\mu\times\mu((T\times T)^{s_t}(C_1\times D_1)\cap(C_2\times D_2)) >0.
	\end{split}
	\end{equation*}
	The proof is completed.
\end{proof}

\subsection*{Acknowledgements}
The first author is supported by CONICYT Doctoral fellowship 21110300 and grants Basal-CMM. The second author is supported by NNSF of China (11571335, 11431012, 11171320) and by ``the Fundamental Research Funds for the Central Universities''. The first author thanks the hospitality of University of Science and Technology of China where this research was finished.


\begin{thebibliography}{HD}

%% Use the widest label as the parameter.
%% Reference items can be numbered or have labels of your choice, as below.

%% In IMPAN journals, only the title is italicized; boldface is not used.
%% Our software will add links to many articles; for this, enclosing volume numbers in { } is helpful
%% Do not give the issue number unless the issues are paginated separately.



%%%%%%% To ease editing, use normal size:

\normalsize
\baselineskip=17pt

%%%%%%%%%%%%%%%


\bibitem{Alpern} S. Alpern, 
\emph{ Return times and conjugates of an antiperiodic transformation}, 
{ Ergodic Theory Dynam. Systems} { 1} (1981), no. 2, 135--143.

\bibitem{Aus} J. Auslander, 
\emph{Minimal flows and their extensions}, North-Holland Mathematics Studies 153,
North-Holland Publishing Co., Amsterdam, 1988.

\bibitem{BJLR} V. Bergelson, A. del Junco, M. Lema\'nczyk and J. Rosenblatt, \emph{Rigidity and non-recurrence along sequences}, { Ergodic Theory Dynam. Systems} {34} (2014), no. 5, 1464--1502.

\bibitem{DS} S. Donoso and W. Sun,
\emph{A pointwise cubic average for two commuting transformations}, to appear {\it Israel J. Math. }

\bibitem{DS2} S. Donoso and W. Sun,
\emph{ Pointwise multiple averages for systems with two commuting transformations},  arXiv:1509.09310.

\bibitem{FK} B. Fayad and A. Kanigowski,
\emph{ Rigidity times for a weakly mixing dynamical system which are not rigidity times for any irrational rotation}, {Ergodic Theory Dynam. Systems} {35} (2015), no. 8, 2529--2534.

\bibitem{FT} B. Fayad and J-P. Thouvenot, 
\emph{On the convergence to $0$ of $m_n\xi$ mod 1}, { Acta Arith.} {165} (2014), no. 4, 327--332.

\bibitem{GM} E. Glasner and D. Maon,
\emph{Rigidity in topological dynamics}, {Ergodic Theory Dynam. Systems} {9} (1989), no. 2, 309--320.

\bibitem{G} E. Glasner, \emph{Ergodic theory via joinings}, Mathematical Surveys and Monographs, 101. American Mathematical Society, Providence, RI, 2003.

\bibitem{GW} E. Glasner and B. Weiss,
\emph{On the interplay between measurable and topological dynamics}, in: {Handbook of dynamical systems. Vol. 1B, 597--648}, Elsevier B. V., Amsterdam, 2006.


\bibitem{Ho} M. Hochman, \emph{ On notions of determinism in topological dynamics}, {Ergodic Theory Dynam. Systems} {32} (2012), no. 1, 119--140.


\bibitem{HSY2} W. Huang, S. Shao and X. Ye, 
\emph{Pointwise convergence of multiple ergodic averages and strictly ergodic models}, arXiv:1406.5930.

\bibitem{J} R.I. Jewett, \emph{The prevalence of uniquely ergodic systems}, {J. Math. Mech.} {19} 1969/1970 717--729.

\bibitem{K} T. W. Korner, \emph{Recurrence without uniform recurrence}, Ergodic Theory Dynam. Systems {7} (1987),
559--566.


\bibitem{Kr} W. Krieger, \emph{On unique ergodicity}, in: Proceedings of the Sixth Berkeley Symposium on Mathematical Statistics and Probability (Univ. California, Berkeley, Calif., 1970/1971), Vol. II: Probability theory, pp. 327--346, Univ. California Press, Berkeley, Calif., 1972.

\bibitem{Le} E. Lehrer, \emph{Topological mixing and uniquely ergodic systems}, {Israel J. Math.} {57} (1987), no. 2, 239--255.

\bibitem{LSY} Z. Lian, S. Shao and X. Ye, \emph{Weakly mixing, proximal topological models for ergodic systems and applications}, to appear {Fund. Math.}, arXiv:1407.1978.


\bibitem{Sh} P. Shields, \emph{The Theory of Bernoulli Shifts (Chicago Lectures in Mathematics)}. The University of Chicago Press, Chicago--London, 1973.

\bibitem{W1} B. Weiss, \emph{Strictly ergodic models for dynamical systems}, {Bull. Amer. Math. Soc. (N.S.)} {13}
(1985), 143--146.

\bibitem{W} B. Weiss, \emph{Multiple recurrence and doubly minimal systems}, in: Topological dynamics and applications (Minneapolis, MN, 1995), 189--196,
Contemp. Math., 215, {Amer. Math. Soc., Providence, RI, 1998.}


\bibitem{Weiss00} B. Weiss, \textit{Single orbit dynamics}, in: CBMS Regional Conference Series in Mathematics, 95. American Mathematical Society,
Providence, RI, 2000. x+113 pp.

\end{thebibliography}
\end{document}